\newtheorem{theorem}{\sc Theorem}[section]
\newtheorem{lemma}[theorem]{\sc Lemma}
\newtheorem{proposition}[theorem]{\sc Proposition}
\newtheorem{corollary}[theorem]{\sc Corollary}
\newtheorem{remark}[theorem]{\sc Remark}
\renewcommand{\>}{\rangle}
\newcommand{\<}{\langle}
\begin{document}

\author{Jo\~ao Azevedo}
\address{Department of Mathematics, University of Brasilia\\
Brasilia-DF \\ 70910-900 Brazil}
\email{J.P.P.Azevedo@mat.unb.br}

\author{Pavel Shumyatsky}
\address{Department of Mathematics, University of Brasilia\\
Brasilia-DF \\ 70910-900 Brazil}
\email{pavel@unb.br}
\thanks{Supported by CNPq and FAPDF}
\keywords{Commuting probability, Haar measure, compact groups, monothetic subgroups}
\subjclass[2020]{20F24, 20P05, 22C05}

\title[Monothetic Subgroups]{Compact groups with probabilistically central monothetic subgroups}
 \begin{abstract} If $K$ is a closed subgroup of a compact group $G$, the probability that randomly chosen pair of elements from $K$ and $G$ commute is denoted by $Pr(K,G)$. Say that a subgroup $K\leq G$ is $\epsilon$-central in $G$ if $Pr(\<g\>,G)\geq \epsilon$ for any $g$ in $K$. Here $\<g\>$ denotes the monothetic subgroup generated by $g\in G$. Our main result is that if $K$ is $\epsilon$-central in $G$, then there is an $\epsilon$-bounded number $e$ and a normal subgroup $T\leq G$ such that the index $[G:T]$ and the order of the commutator subgroup $[K^e,T]$ both are finite and $\epsilon$-bounded. In particular, if $G$ is a compact group for which there is $\epsilon>0$ such that $Pr(\<g\>,G)\geq \epsilon$ for any $g \in G$, then there is an $\epsilon$-bounded number $e$ and a normal subgroup $T$ such that the index $[G:T]$ and the order of $[G^e,T]$ both are finite and $\epsilon$-bounded.
\end{abstract}
\maketitle

\section{Introduction}
In this paper, all compact groups are Hausdorff topological spaces. By a subgroup of a topological group we mean a closed subgroup unless explicitly stated otherwise. If $S$ is a subset of a topological group $G$, then we denote by $\<S\>$ the subgroup (topologically) generated by $S$. A subgroup of $G$ is monothetic if it is generated by a single element.  

The Borel $\sigma$-algebra $\mathcal M$ of a compact group $G$ is the one generated by all closed subsets of $G$. We say that a measure on $(G, \mathcal M)$ is a (left) Haar measure provided $\mu$ is both inner and outer regular, $\mu(K) < \infty$ and $\mu(xE) = \mu(E)$ for all compact subsets $K$ and measurable subsets $E$ of $G$ (see \cite[Chapter 4]{hewitt-ross} or \cite[Chapter II]{nachbin}). Recall that there is a unique Haar measure $\mu$ on $(G,\mathcal M)$ such that $\mu(G)=1$.

Let $G$ be a compact group and let $K$ be a subgroup of $G$. Consider the set $C = \{(x,y) \in K \times G \, | \, xy = yx\}$. This is closed in $K\times G$ since it is the preimage of 1 under the continuous map $f : K \times G \to G$ given by $f(x,y) = [x,y]$. Denoting the normalized Haar measures of $K$ and $G$ by $\nu$ and $\mu$, respectively, the probability that a random element from $K$ commutes with a random element from $G$ is defined as $Pr(K,G) = (\nu \times \mu)(C)$. This is a well-studied concept (see in particular \cite{eb, erl, gr, gustafson, hr1, le1, le2, nath, re}).

Recently Detomi and the second author proved in \cite{ds} that if $G$ is finite and $Pr(K,G) \geq \epsilon$ for some $\epsilon >0$, then there is a normal subgroup $T$ of $G$ and a subgroup $B$ of $K$ such that the indices $[G:T]$ and $[K:B]$ and the order of the subgroup $[T,B]$ are $\epsilon$-bounded. Throughout the article we use the expression ``$(a, b, \dots )$-bounded" to mean that a quantity is bounded from above by a number depending only on the parameters $a,b,\dots$. If $B$ and $T$ are subgroups of a group $G$, we denote by $[T,B]$ the subgroup generated by all commutators $[t,b]$ with $t\in T$ and $b\in B$. In the case where $K=G$, this is a well-known theorem due to  P.M. Neumann \cite{neumann}. Conversely, if $K$ is a subgroup of a finite group $G$, and if $T\leq G$ and $B \leq K$, then $Pr(K,G)$ is bounded away from zero in terms of the indices $[G:T]$ and $[K:B]$ and the order of $[T,B]$. The present work grew out of a desire to understand the impact of similar probabilistic considerations on the structure of a compact group. Our first goal is to extend the main result of \cite{ds} to compact groups.

\begin{proposition}\label{ds adaptation}
Let $\epsilon > 0$ and let $G$ be a compact group having a subgroup $K$ such that $Pr(K,G) \geq \epsilon$. Then there is a normal subgroup $T \leq G$ and a subgroup $B \leq K$ such that the indices $[G:T]$ and $[K:B]$ and the order of $[T,B]$ are $\epsilon$-bounded. 
\end{proposition}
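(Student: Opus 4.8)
The plan is to establish the statement first for profinite $G$, where one may pass to finite quotients and invoke the theorem of \cite{ds}, and then to reduce the general case to the profinite one by factoring out the identity component $G_0$; the point that makes this reduction work is that a large part of $K$ is forced to centralize $G_0$. The basic tool throughout is the monotonicity of $Pr$ under quotients: if $N$ is a closed normal subgroup of $G$, then $Pr(KN/N,G/N)\geq Pr(K,G)$. Indeed $KN/N$ is a closed subgroup of $G/N$, commuting pairs in $K\times G$ map to commuting pairs under $K\times G\to(KN/N)\times(G/N)$, and the pushforward of the normalized Haar measure of a compact group along a surjective continuous homomorphism is again the normalized Haar measure (being normalized and translation invariant, by uniqueness); hence the pushforward of $\nu\times\mu$ is the product of the Haar measures of $KN/N$ and $G/N$, and evaluating on the commuting set there gives the inequality.

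Suppose $G$ is profinite. For each open normal $N\trianglelefteq G$ we have $Pr(KN/N,G/N)\geq\epsilon$, so by \cite{ds} applied to the finite group $G/N$ with subgroup $KN/N$ the set $\mathcal T_N$ of pairs $(\overline T,\overline B)$ with $\overline T\trianglelefteq G/N$, $\overline B\leq KN/N$ and all of $[G/N:\overline T]$, $[KN/N:\overline B]$, $|[\overline T,\overline B]|$ at most some $\epsilon$-bounded constant $c$ is finite and nonempty. For $N_2\subseteq N_1$ the canonical epimorphism $G/N_2\to G/N_1$ carries a pair in $\mathcal T_{N_2}$ into $\mathcal T_{N_1}$ (indices of images of subgroups do not increase, images of normal subgroups are normal, and commutator subgroups map onto commutator subgroups), and these maps are compatible, so $(\mathcal T_N)$ is an inverse system of nonempty finite sets over the poset of open normal subgroups of $G$ ordered by reverse inclusion, which is directed; hence its inverse limit is nonempty. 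Picking a thread $(\overline T_N,\overline B_N)_N$ and lifting it to $T_N\trianglelefteq G$ with $N\leq T_N$ and to $\hat B_N\leq K$, compatibility makes both families downward directed; put $T=\bigcap_N T_N$ and $B=\bigcap_N\hat B_N$. A coset-separation argument --- if $[G:T]>c$, choose $c+1$ pairwise inequivalent points modulo $T=\bigcap_N T_N$, separate each pair by one of the $T_N$, and intersect the finitely many open normal subgroups involved --- yields $[G:T]\leq c$ and likewise $[K:B]\leq c$; and since $T\leq T_N$, $B\leq\hat B_N$, we get $|[T,B]N/N|\leq|[T_N,\hat B_N]N/N|=|[\overline T_N,\overline B_N]|\leq c$ for every $N$, whence $|[T,B]|\leq c$ because $\bigcap_N N=1$. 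This settles the profinite case.

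For general compact $G$: by Fubini, $\int_K\mu(C_G(k))\,d\nu(k)=Pr(K,G)\geq\epsilon$, so $Y:=\{k\in K:\mu(C_G(k))\geq\epsilon/2\}$ satisfies $\nu(Y)\geq\epsilon/2$. For $k\in Y$ the closed subgroup $C_G(k)$ has positive Haar measure, hence is open of finite index, hence contains the identity component $G_0$; so every $k\in Y$ centralizes $G_0$. Thus the closed subgroup $B_1$ topologically generated by $Y$ centralizes $G_0$, and, having measure $\geq\epsilon/2$, it has index $\leq 2/\epsilon$ in $K$. Applying the profinite case to the profinite group $\Gamma=G/G_0$ and its subgroup $\Lambda=KG_0/G_0$ (with $Pr(\Lambda,\Gamma)\geq\epsilon$) and lifting back, we obtain a closed normal $T\trianglelefteq G$ with $G_0\leq T$ and $[G:T]$ $\epsilon$-bounded, together with a closed $\hat B\leq K$ with $[K:\hat B]$ $\epsilon$-bounded, such that $[T,\hat B]G_0/G_0$ is finite of $\epsilon$-bounded order. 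Set $B=\hat B\cap B_1$; then $[K:B]$ is $\epsilon$-bounded, $B$ centralizes $G_0$, and $D:=[T,B]$ has $DG_0/G_0$ finite of $\epsilon$-bounded order. A direct computation with commutators shows that, since $B$ centralizes the normal subgroup $G_0$, every $[t,b]$ (with $t\in T$, $b\in B$) also centralizes $G_0$; hence $D\leq C_G(G_0)$, so $D\cap G_0\leq Z(G_0)$ is central in $D$ and $[D:Z(D)]\leq|DG_0/G_0|$ is $\epsilon$-bounded.

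The remaining step --- which I expect to be the main obstacle --- is to promote ``$[T,B]$ is finite modulo $G_0$'' to ``$[T,B]$ is finite of $\epsilon$-bounded order'', that is, to bound $|[T,B]\cap G_0|$. Schur's theorem converts the bound on $[D:Z(D)]$ into a bound on $|[D,D]|$, which reduces the matter to bounding the abelian central subgroup $D\cap G_0\leq Z(G_0)$; here one must invoke the hypothesis $Pr(K,G)\geq\epsilon$ once more --- together with the structure theory of compact connected abelian groups --- to exclude an infinite, in particular a positive-dimensional, central part of $[T,B]$, and, correspondingly, to replace $T$ if necessary by a smaller finite-index normal subgroup on which $B$ acts closely enough to trivially (for instance by intersecting with the centralizer of a suitable finite subset of $B_1$), so that $[T,B]$ meets $G_0$ in a finite subgroup. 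Granting that $|[T,B]|$ is $\epsilon$-bounded, the subgroups $T$ and $B$ are as required.
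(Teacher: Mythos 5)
Your reduction to the profinite case is sound: the monotonicity $Pr(KN/N,G/N)\geq Pr(K,G)$, the inverse-limit argument over the finite quotients using the theorem of \cite{ds}, and the coset-separation bounds for $[G:T]$, $[K:B]$ and $|[T,B]|$ all work. The observation that the set $Y=\{k\in K:\mu(C_G(k))\geq\epsilon/2\}$ has $\nu(Y)\geq\epsilon/2$ and consists of elements centralizing $G_0$ is also correct (measurability of $Y$ follows from the closedness of $\{x:|x^G|\leq 2/\epsilon\}$, which needs a word). But the proof is not complete: the last step, which you yourself flag as ``the main obstacle'', is exactly where the whole difficulty of the non-profinite case sits, and what you offer there is a plan, not an argument. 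The problem is genuine and not a formality: take $G=\mathbb{Z}/2\ltimes\mathbb{T}$ with the involution acting by inversion on the circle $\mathbb{T}$, and $K=G$, so $Pr(K,G)=1/4$. Here $G/G_0\cong\mathbb{Z}/2$ is abelian, so the profinite step happily returns $\overline T=G/G_0$ and $\overline B=KG_0/G_0$, whose lifts are $T=G$ and $\hat B=G$; with $B_1=\mathbb{T}$ you get $B=\hat B\cap B_1=\mathbb{T}$ and $[T,B]=[G,\mathbb{T}]=\mathbb{T}$, an entire torus. So your intermediate conclusions ($D\leq C_G(G_0)$, $[D:Z(D)]$ bounded, $|[D,D]|$ bounded by Schur) are all true yet insufficient, and nothing in the data coming from $G/G_0$ tells you how to shrink $T$; the shrinking must be produced by a new argument using the commuting-probability hypothesis inside $G$ itself, with explicit bounds on the index of the new $T$ and on $|[T,B]|$, and this is precisely what is missing.

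For comparison, the paper avoids the $G_0$-versus-$G/G_0$ split altogether. It takes $X=\{x\in K:|x^G|\leq 2/\epsilon\}$ (your $Y$, in different notation), generates $B=\langle X\rangle$, and uses Lemma \ref{eberhard} to conclude that every element of $B$ has $\epsilon$-boundedly finite conjugacy class in $G$; then Theorem \ref{acciarri-shumyatsky} makes $[L,L]$ finite and bounded for $L=\langle B^G\rangle$, one passes to $G/[L,L]$ (using Lemma \ref{degree of group and quotient} to preserve the hypothesis), and a symmetric argument with $\{y\in G:|y^K|\leq 2/\epsilon\}$ produces a bounded-index normal subgroup $T$ all of whose elements have bounded $K$-classes; the Baer-type Lemma \ref{over abelian subgroup} then bounds $|[T,B]|$ directly. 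Something of this kind (bounded word length in $X$, hence bounded conjugacy classes for all of $B$, plus a BFC-type theorem) is what your sketch of ``intersecting with centralizers of a suitable finite subset of $B_1$'' would have to be turned into; as written, the proposal does not prove the proposition.
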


Let $G$ be a compact group and let $K$ be a subgroup of $G$. If $Pr(\< g \>, G) \geq \epsilon$ for every $g \in K$ we say that $K$ is $\epsilon$-central in $G$, and in the case where $K = G$ we say that $G$ is $\epsilon$-central. If $e$ is a positive integer, we denote by $G^e$ the subgroup generated by all $e$th powers of elements of $G$. Recall that a group $G$ has finite exponent $e$ if $G^e = 1$ and $e$ is the least positive number with this property.  It is easy to see that if $G$ has exponent $e$, then $Pr(\< g \>, G) \geq \frac{1}{e}$ for all $g \in G$. More generally, if $G^e \leq Z(G)$, where $Z(G)$ denotes the center of $G$, then $Pr(\<g\>,G)\geq \frac{1}{e}$ for all $g \in G$. We shall prove the following theorem.

\begin{theorem}\label{pavel}
Let $\epsilon > 0$ and assume that the subgroup $K$ is $\epsilon$-central in $G$. Then there is an $\epsilon$-bounded number $e$ and a finite-index normal subgroup $T \leq G$ such that the index $[G:T]$ and the order of $[K^e, T]$ are $\epsilon$-bounded.
\end{theorem}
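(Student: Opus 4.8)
The plan is to extract, from the defining integral, a bounded power of each $g\in K$ lying in an abelian subgroup that behaves well under $Pr(\cdot,G)$, feed this into Proposition~\ref{ds adaptation}, and then globalise the resulting local information.

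\emph{First reductions and the local step.} Since Haar measure is conjugation-invariant, $Pr(\<g\>,G)=Pr(\<g^{x}\>,G)$ for all $x\in G$, so the normal closure $K^{G}$ is again $\epsilon$-central; as $K^{e}\le (K^{G})^{e}$, proving the theorem for $K^{G}$ gives it for $K$, and I may assume $K\trianglelefteq G$. For $g\in K$ write $Pr(\<g\>,G)=\int_{\<g\>}\mu(C_G(x))\,d\nu_g(x)\ge\epsilon$, where $\nu_g$ is the Haar measure of the compact abelian group $\<g\>$. Chebyshev's inequality yields $\nu_g(\Delta_g)\ge\epsilon/2$ for $\Delta_g:=\{x\in\<g\>:\mu(C_G(x))\ge\epsilon/2\}$. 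Each $x\in\Delta_g$ has $[G:C_G(x)]\le 2/\epsilon$, so $C_G(x)$ is open and contains $G^{\circ}$; hence $\Delta_g\subseteq C_G(G^{\circ})$. Let $H_g:=\overline{\<\Delta_g\>}$, a closed subgroup of $\<g\>$ with $\nu_g(H_g)\ge\epsilon/2$; a closed subgroup of positive Haar measure is open of index at most the reciprocal of its measure, so $d_g:=[\<g\>:H_g]\le 2/\epsilon$ and thus $g^{e_0}\in H_g$ for $e_0:=\lfloor 2/\epsilon\rfloor!$. In particular $K^{e_0}\le C_G(G^{\circ})$, which absorbs the connected part of $G$ once and for all. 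Finally, since $\nu_{H_g}=d_g\,\nu_g|_{H_g}$ and $\Delta_g\subseteq H_g$, we get $Pr(H_g,G)=d_g\int_{H_g}\mu(C_G(x))\,d\nu_g(x)\ge d_g\cdot\tfrac{\epsilon}{2}\cdot\nu_g(\Delta_g)\ge\epsilon^{2}/4$.

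\emph{Applying Proposition~\ref{ds adaptation}.} For each $g\in K$ apply it to $(H_g,G)$: there are $T_g\trianglelefteq G$ and $B_g\le H_g$ with $[G:T_g]$, $[H_g:B_g]$, $|[T_g,B_g]|$ all $\epsilon$-bounded, say by $c_1,c_3,c_2$. Raising $g^{e_0}\in H_g$ to the power $c_3!$ gives $g^{e_1}\in B_g$ with $e_1:=e_0\,c_3!$; then $[g^{e_1},t]\in[B_g,T_g]$ for $t\in T_g$, so $(g^{e_1})^{T_g}$ has at most $c_2$ elements, whence $[T_g:C_{T_g}(g^{e_1})]\le c_2$ and $[G:C_G(g^{e_1})]\le c_1c_2$. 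Therefore there is an $\epsilon$-bounded pair $(e_1,c)$ with $\mu(C_G(g^{e_1}))\ge c$ for \emph{every} $g\in K$; equivalently, $K^{e_1}$ is a closed normal subgroup of $G$ generated by elements whose $G$-conjugacy classes have $\epsilon$-bounded size.

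\emph{The core step and conclusion.} It remains to upgrade this to: there is $m=m(\epsilon)$ with $[G:C_G(y)]\le m$ for \emph{all} $y\in K^{e_1}$, not only for the generators $g^{e_1}$. Granting this, the compact-group version of B.\,H.\,Neumann's theorem—in the relative form asserting that a normal subgroup all of whose $G$-conjugacy classes are bounded has finite, boundedly large, commutator with $G$—gives that $|[K^{e_1},G]|$ is finite and $\epsilon$-bounded, so the theorem holds with $T=G$ and $e=e_1$; reversing the passage to $K^{G}$ and the exponent bookkeeping recovers the statement for the original $K$. To prove the upgrade I would reduce to $G$ finite: the hypothesis and conclusion pass to finite quotients, the connected part has been put inside $C_G(K^{e_0})$, and the uniform $\epsilon$-bounds allow returning through the inverse limit. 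For finite $G$ one must show that a normal subgroup $N=K^{e_1}$ which is generated by elements of class $\le m$ and is itself $\epsilon$-central (each monothetic $\<y\>$, $y\in N$, again meeting a positive-measure amount of the bounded-class locus) is BFC with an $\epsilon$-bounded bound, combining the $\epsilon$-central hypothesis with structural information about finite groups along the lines of \cite{ds} and Neumann's theorem.

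\emph{Main obstacle.} This last upgrade is the hard part: being generated by elements of bounded conjugacy class does not by itself force a group to be BFC—word lengths in such generators are unbounded—so a genuine use of the $\epsilon$-central hypothesis of $K^{e_1}$ (equivalently of $K$) together with finite group structure, after reduction, is unavoidable. Everything preceding it is the soft measure-theoretic and combinatorial bookkeeping.
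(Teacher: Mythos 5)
Your first two steps essentially reproduce what the paper does in its introduction: from $Pr(\<g\>,G)\geq\epsilon$ one applies Proposition \ref{ds adaptation} to a positive-measure piece of $\<g\>$ and extracts $\epsilon$-bounded $l,n$ with $[G:C_G(g^{l})]\leq n$ for every $g\in K$. (Two small caveats there: your opening reduction to $K\trianglelefteq G$ is unjustified, since elements of $K^{G}$ are \emph{products} of conjugates of elements of $K$, not conjugates, and $\epsilon$-centrality does not pass to such products; fortunately this reduction is not needed for the bound on $[G:C_G(g^{e_1})]$.) The real problem is the ``core step''. First, the statement you aim at --- $T=G$ with $|[K^{e_1},G]|$ $\epsilon$-bounded --- is false: take $G$ dihedral of order $2n$ with $n$ an odd prime and $K=G$. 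Then $G$ is $\tfrac12$-central for every $n$, but for any fixed $e$ with $(e,n)=1$ one has $G^{e}\supseteq C_n$ and $[G^{e},G]\supseteq[C_n,G]=C_n$, of unbounded order; so a genuinely proper $T$ (here $T=C_n$) is unavoidable, and the same family (or $\mathbb{Z}_p\rtimes C_2$ with inverting action in the compact setting) refutes the ``relative BFC'' principle you invoke: a normal subgroup all of whose $G$-classes have size at most $2$ can have infinite $[N,G]$. The correct consequence of bounded $G$-classes on all of $N$ is only a bound on $[N,N]$ (Theorem \ref{acciarri-shumyatsky}), or, after another application of Proposition \ref{ds adaptation} to $Pr(K^{e_1},G)\geq 1/m$, the existence of a proper $T$ as in the theorem.

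Second, and decisively, the ``upgrade'' itself --- passing from $[G:C_G(g^{e_1})]\leq m$ for the generators $g^{e_1}$ to a uniform bound on the classes of \emph{all} elements of $K^{e_1}$ --- is exactly the hard content of the theorem, and you explicitly leave it unproved. In the paper this is Proposition \ref{proposition}, whose proof occupies Section 4: one works in $H=\<X\>$ where $X$ is the union of the classes of $l$th powers of elements of $K$, inducts on the maximal class size $m$ inside $H$, chooses $d\in K$ with $[H:C_H(d^{l^2})]=m$, sets $a=d^{l}$, and uses the centralizer $U$ of bounded index together with the bounded normal subgroup $R=\<[H,a]^{G}\>$ to force, modulo $R$ and $Z(\overline H)$, the subgroup $\overline K_0$ to have exponent dividing $l$; only then can Proposition \ref{ds adaptation} be applied a second time, to $\overline K_0^{l^{2}}$, to produce $T$ and $V$ and hence the bounded $e$. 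None of this is ``bookkeeping'', and your sketch (reduce to finite $G$, combine with structure theory ``along the lines of \cite{ds}'') does not supply it. As it stands the proposal proves only the paper's easy intermediate statement and aims the remaining work at a conclusion that is false in the stated form.
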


The theorem implies that if $G$ is an $\epsilon$-central compact group, then there is an $\epsilon$-bounded number $e$ and a normal subgroup $T$ such that the index $[G : T]$ and the order of $[G^e, T]$ are $\epsilon$-bounded. Moreover, the exponent of the commutator subgroup $[T,T]$ is $\epsilon$-bounded. Indeed, passing to the quotient over $[G^e,T]$ we can assume that all $e$th powers of elements of $G$ centralize $T$. In particular, $T/Z(T)$ has exponent $e$, and a theorem of Zelmanov \cite{ze3} ensures that $T/Z(T)$ is locally finite. A result of Mann \cite{mann} can then be used to deduce that $[T,T]$ has finite $e$-bounded exponent.

As usual, we denote the conjugacy class of an element $x\in G$ by $x^G$. It is easy to see that if $K$ is a  subgroup of $G$ such that $|x^G|\leq n$ for every $x \in K$, then $K$ is $\frac{1}{n}$-central in $G$. More generally, let $l, n$ be positive integers and suppose that $K$ is a subgroup of a compact group $G$ such that any conjugacy class containing an $l$th power $x^l$ of an element $x \in K$ is of size at most $n$. It is not difficult to see that $K$ is $\frac{1}{ln}$-central in $G$. It turns out that this admits a converse: if $K$ is $\epsilon$-central in $G$, then there exist $\epsilon$-bounded integers $l$ and $n$ such that every conjugacy class containing an $l$th power of an element of $K$ has cardinality at most $n$. Indeed, let $x \in K$. Since $Pr(\<x\>,G) \geq \epsilon$, in view of Proposition \ref{ds adaptation} there is a normal subgroup $T$ of $G$ and a subgroup $B$ of $\<x\>$ such that the indices $[G:T]$ and $[\<x\>:B]$ and the order of $[T,B]$ are $\epsilon$-bounded. Hence, as required, there are $\epsilon$-bounded numbers $l$ and $n$ such that $[G:C_G(x^l)]\leq n$ for all $x \in K$. Therefore we have proved that
\medskip

\textit{For every $0 < \epsilon \leq 1$ there are positive integers $l$ and $n$ depending only on $\epsilon$ with the property that if $K$ is an $\epsilon$-central subgroup of the compact group $G$, then $[G:C_G(g^l)] \leq n$ for all $g \in K$.}
\medskip

Taking this into consideration, Theorem \ref{pavel} will follow from the next proposition. 

\begin{proposition}\label{proposition}
Let $G$ be a compact group and let $l, n$ be positive integers. Suppose that there is a subgroup $K$ of $G$ such that $[G:C_G(g^l)] \leq n$ for every $g \in K$. Then there exist a positive integer $e$, depending only on $l$ and $n$, and a normal subgroup $T$ of $G$, such that the index $[G:T]$ and the order of $[K^e, T]$ are $(l,n)$-bounded.
\end{proposition}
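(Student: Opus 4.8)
The plan is to reduce to the case of a finite group, settle that case by the combinatorial methods underlying Proposition~\ref{ds adaptation} and the theorem of Neumann, and then lift the conclusion back to an arbitrary compact group by an inverse-limit argument. For the reduction, recall that a compact group $G$ is the inverse limit of its Lie quotients $G/N$, and that the hypothesis descends to each of them, since the image of $C_G(g^l)$ in $G/N$ lies in $C_{G/N}(\bar g^l)$ and has index at most $n$; so it suffices to treat compact Lie groups, provided $e$ and all the bounds depend only on $l$ and $n$. If $G$ is a compact Lie group then $G/G_0$ is finite, where $G_0$ is the identity component, and a closed subgroup of finite index in the connected group $G_0$ must equal $G_0$; hence for every $g\in K$ the subgroup $C_{G_0}(g^l)=G_0\cap C_G(g^l)$, having index at most $n$ in $G_0$, is all of $G_0$. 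Thus $K^l$ centralizes $G_0$, and once $e$ is chosen to be a multiple of $l$ we have $K^e\le K^l$ and so $[K^e,G_0]=1$; what then remains is to control the image of $[K^e,T]$ in the finite group $G/G_0$. In this way everything comes down to the following assertion about finite groups: if $H$ is finite and $Q\le H$ satisfies $[H:C_H(q^l)]\le n$ for all $q\in Q$, then there exist an $(l,n)$-bounded integer $e$ and a normal subgroup $S\trianglelefteq H$ such that $[H:S]$ and $|[Q^e,S]|$ are $(l,n)$-bounded.

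To prove this finite assertion I would exploit that, for each $q\in Q$, the normal closure $\langle q^l\rangle^H$ is generated by the at most $n$ conjugates of $q^l$, each of which lies in a conjugacy class of size at most $n$; consequently the centre of $\langle q^l\rangle^H$ has $n$-bounded index, and by Schur's theorem $[\langle q^l\rangle^H,\langle q^l\rangle^H]$ has $n$-bounded order. Feeding these facts, for all $q\in Q$ simultaneously, into the combinatorial argument of \cite{ds} underlying Proposition~\ref{ds adaptation}, one should produce a normal subgroup $S$ of $(l,n)$-bounded index together with a subgroup $B\le Q$ of $(l,n)$-bounded index $m$ for which $|[B,S]|$ is $(l,n)$-bounded; one then passes from $B$ to the verbal subgroup $Q^{e}$ by taking $e=m!$, since the core of $B$ in $Q$ has index dividing $m!$, whence $Q^{m!}\le B$ and $[Q^{e},S]\le[B,S]$. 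I expect the main obstacle to lie precisely here: making this whole scheme quantitative and uniform, and in particular pinning down a single $e$ that depends only on $l$ and $n$, since neither taking $S=H$ (which works when $[Q^e,H]$ is small, as for groups that are bounded-by-abelian) nor taking $S=C_H(Q^e)$ (which works when the action of $H$ on $Q^e$ is small) is adequate in general, and the two regimes must be reconciled.

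Finally, to recover the general compact case I would apply the finite assertion to every finite quotient $G/M$ with $M$ open and normal, treating first $G$ profinite and then splicing in the connected part via the remark that $K^l$ centralizes $G_0$. This yields, for each such $M$, a normal subgroup $T_M\supseteq M$ of index at most $c=c(l,n)$ with the order of $[K^e,T_M]M/M$ at most $c'=c'(l,n)$. Since a compact group may have infinitely many open normal subgroups of a prescribed finite index, one cannot simply intersect the $T_M$; instead I would fix the $(l,n)$-bounded data (the isomorphism type of $G/T_M$ and the order of the image of $[K^e,T_M]$) and run a compactness argument over the directed set of the $M$'s to extract a cofinal, compatible subfamily whose inverse limit is a closed normal subgroup $T$ of index at most $c$ with $[K^e,T]$ finite of order at most $c'$. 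Combined with the reductions of the first paragraph, this gives the proposition.
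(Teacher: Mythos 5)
Your proposal does not actually prove the statement, and it diverges from the paper in two places where the real difficulties sit. First, the finite-group assertion that you reduce everything to is precisely the heart of the proposition, and you leave it unproved: the hypothesis only controls centralizers of $l$th powers, so it gives no lower bound on any commuting probability, and therefore the machinery of \cite{ds} (Proposition \ref{ds adaptation}) cannot be ``fed'' these data directly. Your observation that $\langle q^l\rangle^H$ has centre of $n$-bounded index and bounded derived subgroup (via Schur) corresponds to Lemma \ref{boundedd} in the paper, but by itself it does not reconcile the ``two regimes'' you mention. What the paper actually does is work with the compact group itself (no passage to Lie or finite quotients): it sets $X$ equal to the union of the $G$-classes of $l$th powers of elements of $K$, $H=\langle X\rangle$, and inducts on the maximal size $m\le n$ of an $H$-class of an element of $X$; in the inductive step it picks $a=d^l$ realizing $m$, builds a normal subgroup $R$ of bounded order from subgroups $[H,a^{b_i}]$, and shows that modulo $R$ a bounded-index subgroup $K_0$ has exponent dividing $l$ modulo $Z(\overline H)$, so every element of $\overline K_0^{l^2}$ is a genuine $l$th power and $Pr(\overline K_0^{l^2},\overline G)\ge 1/n$; only then is Proposition \ref{ds adaptation} applicable, and the bounded subgroup $V$ it produces is converted into a verbal subgroup $K^e$ exactly by the core/index trick you describe. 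None of this inductive mechanism appears in your sketch, and you yourself flag the missing step as ``the main obstacle.''

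Second, your reduction over the identity component is flawed as stated. From $[K^e,G_0]=1$ (for $l\mid e$) it does not follow that ``what remains is to control the image of $[K^e,T]$ in $G/G_0$'': the commutators $[x,t]$ with $x\in K^e$, $t\in T$ can land inside $G_0$ and generate an infinite subgroup there. Concretely, take $G=O(2)$, $K=G$, $l=n=2$; then $K^e=SO(2)$ centralizes $G_0=SO(2)$ for every even $e$, and the image of $[K^e,G]$ in $G/G_0$ is trivial, yet $[K^e,G]=\{z^{-2}:z\in SO(2)\}=SO(2)$ is infinite. So the normal subgroup $T$ cannot be read off from the finite quotient $G/G_0$ alone (here the preimage of the subgroup supplied by your finite assertion could be all of $G$, which fails), and the subsequent inverse-limit/compactness splicing inherits the same problem. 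In short, both the reduction and the finite core need genuinely new arguments, which is what the paper's induction on $m$ together with the bounded normal subgroup $R$ supplies.
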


Recall that a group is said to be a BFC-group if its conjugacy classes are finite and have bounded size. A famous theorem of B. H. Neumann says that in a BFC-group the commutator subgroup $G'$ is finite \cite{bhneumann}. It follows that if $|x^G| \leq n$ for every $x \in G$, then the order of $G'$ is bounded by a number depending only on $n$. A first explicit bound for the order of $G'$ was found by J. Wiegold \cite{wiegold}, and the best known was obtained in \cite{gm} (see also \cite{essay} and \cite{segalshalev}). Proposition \ref{proposition}  is an extension of the Neumann theorem (in the particular case where $K = G$ and $l = 1$ the proof shows that we can take $T = G$ and $e = 1$). Recently, some other generalisations of Neumann's theorem have been obtained (see in particular \cite{as, dms, dierings-s}).

We end this introduction with the remark that Theorem 1.1 admits a converse:
\medskip

\textit{For any positive integers $s, e, m$ there is $0 < \epsilon \leq 1$ depending only on $s, e, m$ with the property that if $K$ is a subgroup of a compact group $G$ and if $G$ has a normal subgroup $T$ of index at most $s$ such that $[K^e, T]$ has order at most $m$, then $K$ is $\epsilon$-central in $G$.}
\medskip

 To see this simply note that if $K$ is as above, then $[G : C_G(g^e)] \leq ms$ for every $g \in K$.

\section{Preliminaries}

In this section we record some results needed in the proofs of the main theorems. The following lemma is \cite[Lemma 3.1]{re}.

\begin{lemma}\label{measure of open subgroup}
Let $G$ be a compact group and let $K$ be a  subgroup of $G$. Then either $\mu(K) = 0$ or $\mu(K) > 0$ and $K$ is open on $G$. Furthermore, in the latter case, $\mu(K) = [G:K]^{-1}$.
\end{lemma}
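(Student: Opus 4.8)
The plan is to exploit translation invariance of the Haar measure together with the fact that $(G,\mathcal M,\mu)$ is a probability space. First I would record the basic facts about cosets: since $K$ is closed and left translation by any $x\in G$ is a homeomorphism of $G$, each left coset $xK$ is closed, hence belongs to $\mathcal M$, and $\mu(xK)=\mu(K)$ by the defining invariance property $\mu(xE)=\mu(E)$. The distinct left cosets of $K$ partition $G$ into pairwise disjoint measurable sets, all of the same measure $\mu(K)$.

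Now suppose we are not in the case $\mu(K)=0$, so $c:=\mu(K)>0$. The key observation is that a probability space cannot contain more than $\lfloor 1/c\rfloor$ pairwise disjoint measurable sets of measure $c$: choosing that many distinct cosets and using finite additivity and monotonicity of $\mu$ would give $\mu(G)\geq (\lfloor 1/c\rfloor+1)c>1$, a contradiction. Hence $[G:K]=n$ is finite. Picking coset representatives $x_1,\dots,x_n$ we have the finite disjoint union $G=\bigcup_{i=1}^{n}x_iK$, so additivity yields $1=\mu(G)=\sum_{i=1}^{n}\mu(x_iK)=n\,\mu(K)$, that is, $\mu(K)=n^{-1}=[G:K]^{-1}$. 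Finally, $G\setminus K=\bigcup_{i=2}^{n}x_iK$ is a finite union of closed sets and therefore closed, so $K$ is open in $G$. This gives exactly the stated dichotomy.

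I do not anticipate a genuine obstacle: the argument is elementary once the right pieces are assembled. The only points that require a little care are the measurability of the cosets (secured by the fact that translations are homeomorphisms, so cosets of a closed subgroup are closed and hence Borel) and the finiteness step, where one must invoke the simple but essential observation that in a finite measure space there can be only finitely many pairwise disjoint sets of a fixed positive measure — it is this, rather than any computation, that forces $[G:K]$ to be finite and lets the additive identity $1=n\,\mu(K)$ be applied.
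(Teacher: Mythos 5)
Your proof is correct. Note that the paper does not prove this lemma at all --- it simply cites it as Lemma 3.1 of Rezaei--Erfanian \cite{re} --- and your argument (translation invariance, the impossibility of more than $\lfloor 1/\mu(K)\rfloor$ pairwise disjoint cosets of measure $\mu(K)$ in a probability space, the identity $1=n\mu(K)$, and openness of $K$ because its complement is a finite union of closed cosets) is exactly the standard one; it is also the same coset-counting device the paper itself employs in the proof of Lemma \ref{measure estimate on the index}.
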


\begin{lemma}\label{measure estimate on the index}
Let $G$ be a compact group, and let $K$ and $H$ be  subgroups of $G$ with $K \leq H$. Assume further that $\mu(K) \geq \epsilon \mu(H) > 0$ for some positive $\epsilon$. Then $[H:K] \leq \epsilon^{-1}$. 
\end{lemma}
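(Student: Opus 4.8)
The plan is to derive this almost immediately from Lemma \ref{measure of open subgroup}. First I would note that since $\mu(K) \geq \epsilon\mu(H) > 0$, both $\mu(H)$ and $\mu(K)$ are strictly positive. As $H$ and $K$ are closed subgroups of $G$, Lemma \ref{measure of open subgroup} applies to each of them and yields that $H$ and $K$ are open in $G$, with $\mu(H) = [G:H]^{-1}$ and $\mu(K) = [G:K]^{-1}$. In particular the indices $[G:H]$ and $[G:K]$ are finite.

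Next I would use multiplicativity of the index along the chain $K \leq H \leq G$: since $[G:K]$ is finite, so are $[G:H]$ and $[H:K]$, and $[G:K] = [G:H]\cdot[H:K]$. Substituting the expressions from the previous step gives
\[
[H:K] = \frac{[G:K]}{[G:H]} = \frac{\mu(H)}{\mu(K)}.
\]

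Finally, the hypothesis $\mu(K) \geq \epsilon\mu(H)$ rearranges to $\mu(H)/\mu(K) \leq \epsilon^{-1}$, whence $[H:K] \leq \epsilon^{-1}$, as claimed.

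I do not expect a real obstacle here. The only points requiring a little care are that Lemma \ref{measure of open subgroup} is phrased for subgroups of a \emph{fixed} ambient compact group, so one should measure both $H$ and $K$ against the Haar measure $\mu$ of $G$ rather than passing to the Haar measure of $H$; and that the multiplicativity identity $[G:K] = [G:H]\cdot[H:K]$ is legitimate precisely because the finiteness of $[G:K]$, already established, forces the two factors to be finite as well.
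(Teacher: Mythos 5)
Your proof is correct, but it takes a genuinely different route from the paper's. You reduce everything to Lemma \ref{measure of open subgroup}: since $\mu(K)\geq\epsilon\mu(H)>0$, both closed subgroups have positive measure, hence are open of finite index in $G$ with $\mu(H)=[G:H]^{-1}$ and $\mu(K)=[G:K]^{-1}$, and index multiplicativity along $K\leq H\leq G$ gives $[H:K]=\mu(H)/\mu(K)\leq\epsilon^{-1}$. The paper instead argues entirely inside $H$, using only left-invariance and finite additivity of $\mu$: if $[H:K]$ were infinite, one could choose $n$ coset representatives with $n\epsilon>1$, and the $n$ disjoint cosets $x_iK\subseteq H$ would force $\mu(H)\geq n\mu(K)\geq n\epsilon\mu(H)>\mu(H)$, a contradiction; once $[H:K]=m$ is finite, $\mu(H)=m\mu(K)\geq m\epsilon\mu(H)$ yields $m\leq\epsilon^{-1}$. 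Your version leans on the nontrivial imported fact that a positive-measure closed subgroup is open (that is the content of Lemma \ref{measure of open subgroup}), which is perfectly legitimate here since the paper has already recorded it; the paper's argument buys self-containedness, never passing through the ambient indices $[G:H]$ and $[G:K]$ and using nothing beyond the defining invariance of the Haar measure. Both arguments deliver the same bound, and your care in measuring $H$ and $K$ with the Haar measure of $G$ (rather than renormalizing on $H$) is exactly the point that needs attention in your approach.
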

\begin{proof} Since $\mu(K),\mu(H)>0$, the previous lemma implies that both subgroups are of finite index and $\mu(K) = [G:K]^{-1}$ and $\mu(H) = [G:H]^{-1}$. Hence the result.
\end{proof}

For every $x \in G$, the centralizer $C_G(x)$ equals $f_x^{-1}(1)$, where $f_x$ is the continuous function $f_x(y) = [x,y]$, so this subgroup is closed and measurable. 

\begin{lemma}\label{monotone degree}
Let $H$ and $K$ be subgroups of a compact group $G$, with $H \leq K$. Then $$Pr(K, G)\leq Pr(H, G) \leq Pr(H,K).$$ \noindent In particular, $Pr(G, G) \leq Pr(K,G) \leq Pr(K, K)$. 
\end{lemma}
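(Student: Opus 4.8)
The plan is to reduce both inequalities to a single measure-theoretic observation about subgroups and then invoke Fubini's theorem. The observation I would isolate first is: if $A \leq B$ are closed subgroups of a compact group and $D$ is any closed subgroup of $B$, then $\nu_B(D) \leq \nu_A(D \cap A)$, where $\nu_A, \nu_B$ denote the normalized Haar measures of $A, B$. This is immediate from Lemma \ref{measure of open subgroup}: if $\nu_B(D) = 0$ there is nothing to prove, and otherwise $D$ is open in $B$ with $\nu_B(D) = [B:D]^{-1}$, while $D \cap A$ is open in $A$ with $\nu_A(D\cap A) = [A:D\cap A]^{-1}$; since the canonical map $A/(D\cap A) \to B/D$ is injective we get $[A:D\cap A] \leq [B:D]$, which is exactly the asserted inequality.

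Next I would rewrite the quantities to be compared using Fubini's theorem applied to the characteristic function of the closed set $C = \{(x,y) : [x,y] = 1\}$. Integrating first over the second variable yields, for subgroups $L \leq M$ of $G$, the identity $Pr(L,M) = \int_L \mu_M(C_M(x))\,d\nu_L(x)$, and integrating first over the first variable yields $Pr(L,M) = \int_M \nu_L(C_M(y) \cap L)\,d\mu_M(y)$, where $C_M(x) = C_G(x) \cap M$. All the relevant slices are closed, hence measurable, so Tonelli's theorem applies to these nonnegative integrands. For $Pr(H,G) \leq Pr(H,K)$ I would use the first identity: $Pr(H,G) = \int_H \mu(C_G(x))\,d\nu_H(x)$ and $Pr(H,K) = \int_H \nu_K(C_G(x) \cap K)\,d\nu_H(x)$, and then apply the observation with $A = K$, $B = G$, $D = C_G(x)$ to obtain $\mu(C_G(x)) \leq \nu_K(C_G(x) \cap K)$ pointwise on $H$; integrating over $H$ finishes this half. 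For $Pr(K,G) \leq Pr(H,G)$ I would use the second identity: $Pr(K,G) = \int_G \nu_K(C_G(y) \cap K)\,d\mu(y)$ and $Pr(H,G) = \int_G \nu_H(C_G(y) \cap H)\,d\mu(y)$, and apply the observation with $A = H$, $B = K$, $D = C_G(y) \cap K$ (a closed subgroup of $K$) to get $\nu_K(C_G(y) \cap K) \leq \nu_H(C_G(y) \cap K \cap H) = \nu_H(C_G(y) \cap H)$ pointwise on $G$; integrating over $G$ finishes. The "In particular" assertion is then the special cases $H = K$ (giving $Pr(K,G) \leq Pr(K,K)$) and the chain $K \leq G$ with $K$ in the role of the smaller subgroup (giving $Pr(G,G) \leq Pr(K,G)$).

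I do not expect a serious obstacle. The only points needing care are the measurability of the slices, which holds because centralizers are closed, so that Fubini/Tonelli genuinely applies, and correctly tracking which subgroup plays the role of $A$ and which plays the role of $B$ in the two applications of the observation: the two inequalities go in opposite directions and rely on Fubini taken in the two different orders.
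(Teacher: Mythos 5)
Your proposal is correct and follows essentially the same route as the paper: both arguments compare the normalized measures of centralizers pointwise via the injectivity of the natural map between coset spaces (your observation $\nu_B(D)\leq\nu_A(D\cap A)$ is exactly the paper's deduction $\nu(C_K(x))\leq\lambda(C_H(x))$, made explicit through Lemma \ref{measure of open subgroup}), and then integrate, using Fubini in the two different orders for the two inequalities. The only difference is presentational: you isolate the measure comparison as a standalone observation and spell out the half the paper dismisses as ``analogous.''
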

\begin{proof}
Let $\mu$, $\nu$ and $\lambda$ be the normalized Haar measures of $G$, $K$ and $H$, respectively. Given $x \in G$, the map $\alpha: \{hC_H(x) \, | \, h \in H\} \to \{kC_K(x) \, | \, k \in K\}$ taking $hC_H(x)$ to $hC_K(x)$ is injective. We deduce that $[H:C_H(x)] \leq [K:C_K(x)]$ and $\nu(C_K(x)) \leq \lambda(C_H(x)).$  We have \begin{gather*}Pr(H,G) = \int\limits_{G} \lambda(C_H(x)) d\mu(x) \geq \int\limits_{G} \nu(C_K(x)) d\mu(x) = Pr(K, G). \end{gather*} \noindent The other inequality is proved in an analogous way.
\end{proof}

Suppose that $G$ is a compact group and let $N$ be a normal subgroup of $G$. The normalized Haar measure on $G/N$ coincides with the one induced by the normalized Haar measure on $G$. If $A$ is a measurable subset of $G$, we denote by $\chi_A$ the characteristic function of $A$. We say that $x \in G$ is an FC-element if the conjugacy class of $x$ in $G$ is finite.

In the case of finite groups the next lemma was established in \cite{ds}.

\begin{lemma}\label{degree of group and quotient}
Let $G$ be a compact group and let $N$ be a normal subgroup of $G$. For any subgroup $K$ of $G$, we have $$Pr(K,G) \leq Pr(KN/N, G/N)Pr(K \cap N, N).$$
\end{lemma}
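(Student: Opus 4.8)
The plan is to imitate the classical finite-group computation that relates commuting probability to the fibre-averaged centralizer measures, using the disintegration of the normalized Haar measure along the quotient map $\pi\colon G\to G/N$. First I would record the Fubini-type formula $Pr(K,G)=\int_G \mu_K(C_K(x))\,d\mu(x)$, where $\mu_K$ is the normalized Haar measure of $K$ and $C_K(x)=\{k\in K:[k,x]=1\}$; this is already the expression appearing in the proof of Lemma~\ref{monotone degree}. The point is to bound, for each $x\in G$, the quantity $\mu_K(C_K(x))$ by the product of the corresponding "quotient" and "kernel" centralizer measures.

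Second I would fix $x\in G$ and analyze the projection $\bar\pi\colon K\to KN/N$. Writing $\bar x=\pi(x)$, one has the obvious inclusion $\pi(C_K(x))\subseteq C_{KN/N}(\bar x)$, so the image of $C_K(x)$ in $KN/N$ has $\mu_{KN/N}$-measure at most $\mu_{KN/N}(C_{KN/N}(\bar x))$. On the other hand, the fibres of $\bar\pi$ restricted to $C_K(x)$ are cosets of $C_K(x)\cap N = C_{K\cap N}(x)$ inside $K\cap N$, so each nonempty fibre has $\mu_{K\cap N}$-measure at most $\mu_{K\cap N}(C_{K\cap N}(x))$, which in turn is at most $\mu_N(C_N(x))$ by the monotonicity argument of Lemma~\ref{monotone degree} applied inside $N$. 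Feeding these two estimates into the disintegration of $\mu_K$ over $\mu_{KN/N}$ (the normalized Haar measure of $K$ disintegrates as an integral over $KN/N$ of translates of the normalized Haar measure of $K\cap N$, by the standard quotient-measure formula quoted just before Lemma~\ref{degree of group and quotient}) gives
\[
\mu_K(C_K(x)) \;\leq\; \mu_{KN/N}\bigl(C_{KN/N}(\bar x)\bigr)\,\mu_{K\cap N}\bigl(C_{K\cap N}(x)\bigr)\;\leq\;\mu_{KN/N}\bigl(C_{KN/N}(\bar x)\bigr)\,\mu_N\bigl(C_N(x)\bigr).
\]

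Third I would integrate this inequality over $x\in G$ with respect to $\mu$. The right-hand side depends on $x$ only through $\bar x$ in its first factor and through $x$ in its second factor, but since $C_N(x)$ depends only on the coset $xN$ in the sense that $C_N(xn)=C_N(x)^{?}$ — more carefully, $\mu_N(C_N(xn))=\mu_N(C_N(x))$ because conjugation by $n\in N$ is a measure-preserving automorphism of $N$ — the function $x\mapsto \mu_N(C_N(x))$ is constant on cosets of $N$, hence descends to $G/N$. Therefore the integral of the product over $G$ factors, using the compatibility of Haar measures on $G$, $N$ and $G/N$, as
\[
\int_G \mu_{KN/N}(C_{KN/N}(\bar x))\,\mu_N(C_N(x))\,d\mu(x)\;=\;\int_{G/N}\mu_{KN/N}(C_{KN/N}(\bar x))\,\Bigl(\int_N \mu_N(C_N(xn))\,d\mu_N(n)\Bigr) d\mu_{G/N}(\bar x),
\]
and the inner integral equals $Pr(K\cap N,N)$ while the outer one is $Pr(KN/N,G/N)$ after recognizing $\int_{G/N}\mu_{KN/N}(C_{KN/N}(\bar x))\,d\mu_{G/N}(\bar x)=Pr(KN/N,G/N)$. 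Combining gives $Pr(K,G)\leq Pr(KN/N,G/N)\,Pr(K\cap N,N)$.

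The main obstacle I anticipate is the rigorous handling of the disintegration step: one must justify that the normalized Haar measure of $K$ really is the "integral" of the translated normalized Haar measures of $K\cap N$ over $KN/N$, and that the measurability of the fibrewise quantities is in order, since $K\cap N$ need not be open in $K$ and $KN/N$ is the image of a closed subgroup. This is handled by the quotient integral formula for compact groups (Weil's formula), together with the fact, already noted in the paper, that all the centralizer sets in sight are closed and hence measurable; the inequality directions in the fibre estimates make the argument robust even when some of these measures vanish.
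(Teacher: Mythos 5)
Your second step contains a fatal error: the inequality $\mu_{K\cap N}(C_{K\cap N}(x))\leq\mu_N(C_N(x))$ is backwards. The monotonicity in Lemma \ref{monotone degree} goes the other way: for the pair $K\cap N\leq N$ one has $[K\cap N:C_{K\cap N}(x)]\leq[N:C_N(x)]$, hence $\mu_N(C_N(x))\leq\mu_{K\cap N}(C_{K\cap N}(x))$, i.e.\ the \emph{smaller} subgroup has the \emph{larger} normalized centralizer measure. The defect is not repairable within your scheme: take $G=N=S_3$ and $K=\langle(12)\rangle$. For $x=(12)$ your claimed pointwise bound reads $1=\mu_K(C_K(x))\leq\mu_{KN/N}(C_{KN/N}(\bar x))\,\mu_N(C_N(x))=1\cdot\tfrac13$, which is false; worse, even after integrating over $G$ your intermediate majorant equals $Pr(N,N)=\tfrac12$, whereas $Pr(K,G)=\tfrac23$, so the route through $\mu_N(C_N(x))$ undershoots the quantity it is supposed to dominate (the lemma itself holds here, since $Pr(K\cap N,N)=Pr(K,S_3)=\tfrac23$). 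By contrast, the first inequality of that step, $\mu_K(C_K(x))\leq\mu_{KN/N}(C_{KN/N}(\bar x))\,\mu_{K\cap N}(C_{K\cap N}(x))$, obtained from Weil's formula for $K\to K/(K\cap N)\cong KN/N$, is correct.

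Your third step has an independent error: $\mu_N(C_N(\cdot))$ is not constant on cosets of $N$ (take $G=S_3\times C_2$, $N=S_3\times 1$ and compare $x=(e,c)$, with $\mu_N(C_N(x))=1$, to $xn=((12),c)$, with $\mu_N(C_N(xn))=\tfrac13$; conjugation by $n$ relates $C_N(x^n)$ to $C_N(x)$, not $C_N(xn)$), and in any case $\int_N\mu_N(C_N(xn))\,d\mu_N(n)$ cannot equal $Pr(K\cap N,N)$, since it does not involve $K$ at all. The repair is to keep the $K\cap N$-relative measure after your correct fibre estimate: apply Weil's formula to the integral over $G$ and bound the inner integral $\int_N\mu_{K\cap N}(C_{K\cap N}(xn))\,d\mu_N(n)$ by $Pr(K\cap N,N)$ via Fubini, noting that for fixed $k\in K\cap N$ the set $\{n\in N:[k,xn]=1\}$ is either empty or a coset of $C_N(k)$, hence has measure at most $\mu_N(C_N(k))$. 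This is precisely the coset--Fubini trick the paper carries out with its set $A_x$ (the paper instead fibres the integral over $K$ via the extended Weil formula and uses the identity $\mu_G(C_G(x)N)\,\mu_N(C_N(x))=\mu_G(C_G(x))$ for FC-elements, then identifies the outer integral with $Pr(KN/N,G/N)$); with that substitution your decomposition over $G$ does go through, but as written the proof is broken at both places indicated.
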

\begin{proof}
If $X$ is a compact group, we denote by $\mu_X$ the normalized Haar measure of $X$. We have $$Pr(K,G) = \int\limits_{K} \mu_G(C_G(x))d\mu_K(x).$$ \noindent If $x \in G$ is an FC-element, then $$\mu_G(C_G(x)N) = [C_G(x)N : C_G(x)]\mu_G(C_G(x)) = [N : C_N(x)]\mu_G(C_G(x)) ,$$ \noindent so $\mu_G(C_G(x)N) \mu_N(C_N(x)) = \mu_G(C_G(x)).$ Let $FC(K)$ be the abstract subgroup of $K$ consisting of elements having finite conjugacy class in $G$. Then
\begin{equation*}
\begin{split}
 \int\limits_{K} \mu_G(C_G(x))d\mu_K(x) = &  \int\limits_{FC(K)} \mu_G(C_G(x))d\mu_K(x)\\ = & \int\limits_{FC(K)} \mu_G(C_G(x)N)\mu_{N}(C_N(x))d\mu_K(x) \\ \leq & \int\limits_{K} \mu_G(C_G(x)N)\mu_{N}(C_N(x))d\mu_K(x).
\end{split}
\end{equation*}\noindent We now apply the extended Weil formula \cite[p.88]{reiter} to the last integral and obtain \begin{equation*}
\begin{split} & \hspace{-20pt} Pr(K,G)\leq \\ \int\limits_{\frac{K}{K\cap N}} & \left( \int\limits_{K \cap N}\mu_G(C_G(xk)N)\mu_{N}(C_N(xk))d\mu_{K \cap N}(k)\right) d\mu_{\frac{K}{K\cap N}}(x(K \cap N))   \\ \leq  \int\limits_{\frac{K}{K\cap N}} & \left( \int\limits_{K \cap N}\mu_{\frac{G}{N}}(C_{\frac{G}{N}}(xN))\mu_{N}(C_N(xk))d\mu_{K \cap N}(k)\right) d\mu_{\frac{K}{K\cap N}}(x(K \cap N)) \\ = \int\limits_{\frac{K}{K\cap N}} & \mu_{\frac{G}{N}}(C_{\frac{G}{N}}(xN))\left( \int\limits_{K \cap N}\mu_{N}(C_N(xk))d\mu_{K \cap N}(k)\right) d\mu_{\frac{K}{K\cap N}}(x(K \cap N)). \,\,\, (1)
\end{split}
\end{equation*} \noindent If $x$ is any element of $K$, define the set  \begin{equation*}
\begin{split}A_x & = \{(k, n) \in (K\cap N) \times N \, | \, [xk, n] = 1\}\\ & = \{(k,n) \in (K \cap N) \times N \, | \, xk \in C_G(n) \cap x(K 	\cap N)\}.\end{split}
\end{equation*} \noindent If $C_G(n) \cap x(K\cap N)$ is nonempty, then it equals $tC_{K \cap N}(k)$ for some $t \in x(K \cap N)$. Thus, \begin{equation*}
\begin{split}A_x & = \{(k, n) \in (K\cap N) \times N \, | \, xk \in tC_{K\cap N}(k)\} \\ & = \{(k, n) \in (K\cap N) \times N \, | \, k \in x^{-1}tC_{K\cap N}(k)\}.\end{split}
\end{equation*} \noindent We use the Lebesgue-Fubini Theorem to give an estimate for the expression in parenthesis in (1): \begin{equation*}
\begin{split}
\int\limits_{K \cap N}\mu_{N}(C_N(xk))d\mu_{K \cap N}(k) & = \int\limits_{(K \cap N) \times N} \chi_{A_x}(k,n) d(\mu_{K \cap N} \times \mu_N)(k,n)  \\ & \leq \int\limits_N \mu_{K \cap N}(x^{-1}tC_{K \cap N}(n)) d\mu_N(n) \\ & = \int\limits_N \mu_{K \cap N}(C_{K \cap N}(n)) d\mu_N(n) \\ & = Pr(K \cap N, N).
\end{split}
\end{equation*} \noindent Replacing this back in $(1)$ we have \begin{equation*}
\begin{split} Pr(K,G) \leq 
& \int\limits_{\frac{K}{K\cap N}} \mu_{\frac{G}{N}}(C_{\frac{G}{N}}(xN))\left( \int\limits_{K \cap N}\mu_{N}(C_N(xn))d\mu_{K \cap N}(n)\right) d\mu_{\frac{K}{K\cap N}}(x(K \cap N))\\ \leq  & Pr(K  \cap N, N)\int\limits_{\frac{K}{K\cap N}} \mu_{\frac{G}{N}}(C_{\frac{G}{N}}(xN)) d\mu_{\frac{K}{K\cap N}}(x(K \cap N)).
\end{split}
\end{equation*} Finally, since $K/ K \cap N$ and $KN/N$ are isomorphic, we can apply Corollary 2.5 in \cite{re} with respect to the last integral above to conclude that \begin{equation*}
\begin{split}
\int\limits_{\frac{K}{K\cap N}} \mu_{\frac{G}{N}}(C_{\frac{G}{N}}(xN)) d\mu_{\frac{K}{K\cap N}}(x(K \cap N)) = & \int\limits_{\frac{KN}{N}} \mu_{\frac{G}{N}}(C_{\frac{G}{N}}(xN)) d\mu_{\frac{KN}{N}}(xN) \\ = & Pr(KN/N, G/N).
\end{split}
\end{equation*} \noindent The lemma follows.
\end{proof}

If $A$ and $B$ are normal subgroups of a group $G$ such that $[A:C_A(B)] \leq m$ and $[B:C_B(A)] \leq m$, then $[A,B]$ has $m$-bounded order. This well-known result is due to Baer, cf. \cite[14.5.2]{robinson}. We need a variation of it, which is Lemma 2.1 in \cite{ds}.

\begin{lemma} \label{over abelian subgroup}
Let $m\geq 1$ and let $G$ be a group containing a normal subgroup $A$ and a subgroup $B$ such that $[A:C_A(y)] \leq m$ and $[B:C_B(x)] \leq m$ for all $x \in A, y \in B$. Assume further that $\< B^G \>$ is abelian. Then $[A,B]$ has finite $m$-bounded order. 
\end{lemma}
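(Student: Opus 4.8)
The plan is to first normalise the configuration, exploit ``$\langle B^{G}\rangle$ abelian'' to linearise the commutator, reduce to a finitely generated situation (where finiteness of $[A,B]$ drops out), and only then chase the uniform bound. So first I would replace $G$ by $\langle A,B\rangle=AN$, where $N:=\langle B^{G}\rangle$; this does not change $[A,B]$ and keeps all the hypotheses (intersecting centralisers with subgroups only lowers indices, and $\langle B^{\langle A,B\rangle}\rangle\le N$ is still abelian). Now $B$ is abelian and $[A,B]\le A\cap N=:Z$, an abelian normal subgroup of $G$. The crucial local point is that, $N$ being abelian, for each fixed $a\in A$ the identity $[a,b_{1}b_{2}]=[a,b_{2}][a,b_{1}]^{b_{2}}=[a,b_{2}][a,b_{1}]$ shows that $b\mapsto[a,b]$ is a homomorphism $B\to N$ with kernel $C_{B}(a)$; hence $[a,B]$ is a subgroup of order $[B:C_{B}(a)]\le m$, and in particular every commutator $[a,b]$ with $a\in A$, $b\in B$ has order at most $m$. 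Consequently $[A,B]$ is generated by elements of order $\le m$ and has exponent dividing $\mathrm{lcm}(1,\dots,m)$. One also records routinely that $b^{g}=b^{a}$ for a suitable $a\in A$ (since $N$ acts trivially on itself) and that $[A:C_{A}(b^{g})]\le m$ for all $b\in B$, $g\in G$, so the hypotheses survive conjugation and passage to subgroups.

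Next I would reduce to the case where $A$ and $B$, hence also $G$ and $N$, are finitely generated. If the lemma failed, choose finitely many commutators $[a_{i},b_{i}]$ generating a subgroup $V_{0}\le[A,B]$ of arbitrarily large order; set $B_{0}=\langle b_{i}\rangle$, note that each orbit $a_{i}^{B_{0}}$ is finite of size $\le m$, so $A_{0}:=\langle a_{i}^{B_{0}}\rangle$ is finitely generated and $B_{0}$-invariant, whence $A_{0}\trianglelefteq G_{0}:=\langle A_{0},B_{0}\rangle=A_{0}B_{0}$. The hypotheses descend to $(G_{0},A_{0},B_{0})$ while $[A_{0},B_{0}]\supseteq V_{0}$, a contradiction. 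So we may assume $A,B$ finitely generated; then, using $b^{G}=b^{A}$, $N$ is a finitely generated abelian group, and since $[A,B]$ has finite exponent it lies inside the finite torsion subgroup of $N$. Thus $[A,B]$ is finite, and only the bound remains.

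It remains to bound $|[A,B]|$ in terms of $m$ alone, and this is the genuinely delicate step. The route I would follow is to pass to the coinvariant quotient $\overline{Z}=Z/[Z,G]$: there the commutator becomes a biadditive map $\overline{\varphi}\colon A\times B\to\overline{Z}$ all of whose rows $\overline{\varphi}(a,B)$ and columns $\overline{\varphi}(A,b)$ have order $\le m$, and a Baer-type argument (the abelian, central-commutator case of the statement itself, in the spirit of \cite[14.5.2]{robinson}) bounds the image of $[A,B]$ in $\overline{Z}$; one then controls $[[A,B],G]=[[A,B],A]$ by the same device and combines. Equivalently --- and this is the real heart, exactly as in B.~H.~Neumann's theorem \cite{bhneumann} --- one shows that $[A,B]=\langle[a_{1},B],\dots,[a_{k},B]\rangle$ for some $m$-bounded $k$: no arbitrarily long strictly increasing chain $[a_{1},B]<[a_{1},B][a_{2},B]<\cdots$ can occur, since such a chain would make $[A:C_{A}(B_{0})]$ large for a small finitely generated $B_{0}$, and from that one manufactures an element of $A$ or of $B$ whose conjugation action violates the bound $m$ (here one uses decisively that everything lies in the abelian $N$ and that $N$ centralises itself). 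Once $k$ is $m$-bounded, $[A,B]$ is a sum of at most $k$ subgroups of order $\le m$, so $|[A,B]|\le m^{k}$ is $m$-bounded. I expect this bounded-generation step to be the main obstacle; everything preceding it is commutator and module bookkeeping.
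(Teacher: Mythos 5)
This lemma is not proved in the paper at all: it is quoted verbatim as Lemma 2.1 of \cite{ds}, so the only question is whether your argument is complete, and it is not. What you do establish is correct and is the easy part: the reduction to $G=A\langle B\rangle$, the containment $[A,B]\le A\cap\langle B^G\rangle$, the observation that for fixed $a$ the map $b\mapsto[a,b]$ is a homomorphism with kernel $C_B(a)$ (so each $[a,B]$ is a subgroup of order at most $m$ and $[A,B]$ has $m$-bounded exponent), and the finite generation bookkeeping showing $[A,B]$ is finite when $A,B$ are finitely generated. But the lemma asserts an order bound depending on $m$ alone, and the finitely generated reduction does not help with that: your reduction presupposes a uniform bound in the finitely generated case (uniform, in particular, in the number of generators), and mere finiteness via the torsion subgroup of a finitely generated abelian group feeds nothing back into the statement.

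The genuine gap is the entire quantitative core, which your third paragraph only gestures at. The chain argument is asserted, not proved: from a long strictly increasing chain $V_{b_1}<V_{b_1}V_{b_2}<\cdots$ (with $V_b=\langle[A,b]\rangle$) you claim that $[A:C_A(B_0)]$ becomes ``large'' for a small finitely generated $B_0$ and that one can then ``manufacture an element of $A$ or of $B$ whose conjugation action violates the bound $m$''; but the hypotheses only bound centralizer indices of single elements, so $[A:C_A(B_0)]\le m^{d}$ for a $d$-generated $B_0$ contradicts nothing, and the manufactured element is exactly the Neumann/Wiegold-type extremal construction that constitutes the proof in \cite{ds} --- you never produce it. The alternative route via $\overline{Z}=Z/[Z,G]$ fares no better: the biadditive half is indeed fine (a Heisenberg-type group built from the induced pairing has classes of size at most $m^2$, and Neumann's theorem \cite{bhneumann} bounds $[A,B]$ modulo $[[A,B],A]$), but the step ``one then controls $[[A,B],G]=[[A,B],A]$ by the same device'' has no justification: the hypotheses give no bound on $[A:C_A(z)]$ for $z\in[A,B]$ (only for $z\in B$), and $B$ centralizes $[A,B]$, so the row/column bounds needed to repeat the argument for the pair $([A,B],A)$ are simply not available. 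Writing $[A,B]=\prod_b V_b$ with each $V_b\trianglelefteq G$ of order at most $m^m$ reduces everything to showing that boundedly many $V_b$ suffice --- which is precisely the bounded-generation step you flag as ``the main obstacle''; that obstacle is the lemma, and it remains unproved in your proposal.
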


The next theorem holds in any group and plays a key role in the proof of Theorem \ref{ds adaptation}. It is taken from \cite{as}.

\begin{theorem}\label{acciarri-shumyatsky}
Let $m$ be a positive integer, $G$ a group having a subgroup $K$ such that $|x^G| \leq m$ for each $x \in K$, and let $H = \<K^G\>$. Then the order of the commutator subgroup $[H,H]$ is finite and $m$-bounded.
\end{theorem}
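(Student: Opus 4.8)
The plan is to derive the theorem from B.\,H. Neumann's theorem, exploiting that $X:=K^{G}$ is a \emph{$G$-invariant} set of generators of $H$ all of whose members have $m$-bounded conjugacy classes.

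First I would record the elementary structure. Since conjugation by $G$ permutes the conjugacy classes of $K$, we have $|x^{G}|\le m$ for every $x\in X$, hence $|x^{H}|\le m$; moreover $X=X^{-1}=X^{G}$ and $\langle X\rangle=H$. For $x\in X$ the normal subgroup $A_{x}:=\langle x^{G}\rangle$ is generated by at most $m$ conjugates of $x$, each with centralizer of index at most $m$ in $G$, so $[A_{x}:Z(A_{x})]\le m^{m}$ and, by Schur's theorem, $|A_{x}'|$ is $m$-bounded; the same argument applied to the two-generator subgroups $\langle x,y\rangle$ with $x,y\in X$ shows that every commutator $[x,y]$ has $m$-bounded order. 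Since $C_{H}(x_{1}\cdots x_{r})\supseteq\bigcap_{i}C_{H}(x_{i})$ has finite index, $H$ is an FC-group and $H'$ is locally finite; and since $|x^{K}|\le m$ for all $x\in K$, Neumann's theorem \cite{bhneumann} already gives that $|K'|$ is $m$-bounded.

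Next I would reduce to the case in which $G$ — hence $K$, hence $H=\langle\bigcup_{i}x_{i}^{G}\rangle$ (where $x_{1},\dots,x_{s}$ generate $K$) — is finitely generated. Any finite subset of $H'$ lies in $\langle X_{0}\rangle'$ for some finite $X_{0}\subseteq X$; writing the members of $X_{0}$ as conjugates $k_{i}^{g_{i}}$ with $k_{i}\in K$, and letting $K_{0}$ be the subgroup generated by the $k_{i}$ and $G_{0}$ the subgroup generated by $K_{0}$ together with the $g_{i}$, one obtains an instance of the hypothesis with the same $m$ and with $G_{0}$ finitely generated, so any bound valid in the finitely generated case transfers. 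In that case $H$ is a finitely generated FC-group, hence center-by-finite, so $H'$ is finite.

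The remaining and hardest point is to make the bound on $|H'|$ depend on $m$ alone. One cannot simply invoke Schur's bound through $[H:Z(H)]$, because the number of generators of $K$ — and with it $[H:Z(H)]$ — need not be $m$-bounded; the $G$-invariance of $X$ must be used. The route I would take: $H'=\langle [x,y]:x,y\in X\rangle$, and the set $\mathcal{C}=\{[x,y]:x,y\in X\}$ is $G$-invariant and consists of elements of $m$-bounded order with $m$-bounded $G$-conjugacy classes. Using Lemma~\ref{over abelian subgroup}, applied in the abelian quotients $A_{x}/A_{x}'$ and $A_{y}/A_{y}'$, one bounds every ``cross'' subgroup $[A_{x},A_{y}]$ by an $m$-bounded number; the crux is then to show that inside a group generated by a $G$-invariant set of bounded-class elements only $m$-boundedly many essentially distinct such subgroups can occur, so that Dietzmann's lemma applied to a bounded conjugation-closed portion of $\mathcal{C}$ forces $|H'|$ to be $m$-bounded. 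I expect this ``non-accumulation'' step to be the main obstacle; when $K=G$ the whole argument collapses to B.\,H. Neumann's theorem itself, with the bound on $|G'|$ doing all the work.
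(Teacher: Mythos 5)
First, a point of context: the paper does not prove this statement at all — it is imported verbatim from \cite{as}, where it is the main theorem, so there is no internal proof to compare against; your attempt has to be judged as a proof of the Acciarri--Shumyatsky theorem itself.

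Judged that way, the routine parts of your argument are fine: every $G$-conjugate of an element of $K$ again has class size at most $m$, so $H=\<K^G\>$ is generated by the $G$-invariant set $X=K^G$; the Schur-type bounds for $\<x^G\>'$ and for the order of a single commutator $[x,y]$ with $x,y\in X$ are correct; Neumann's theorem bounds $|K'|$; and the local reduction is sound, including your (correct) observation that only a \emph{uniform} bound in the finitely generated case transfers, since a directed union of finite groups of unbounded order is infinite. But this means the entire content of the theorem sits in the step you leave open. Your ``non-accumulation'' paragraph is not an argument: the set $\{[x,y]:x,y\in X\}$ is indeed $G$-invariant and consists of elements of $m$-bounded order and $m$-bounded class size, but Dietzmann's lemma only bounds the subgroup generated by a normal subset of \emph{$m$-bounded cardinality}, and nothing you say produces such a subset or shows that ``only $m$-boundedly many essentially distinct subgroups $[A_x,A_y]$ occur''; likewise the appeal to Lemma~\ref{over abelian subgroup} presupposes abelianity hypotheses you have not arranged. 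Without that step your argument proves only that $H$ is an FC-group with $[H,H]$ locally finite (and finite when $G$ is finitely generated), which is standard FC-theory and strictly weaker than the statement; note also that the bound cannot be extracted from $[H:Z(H)]$, since the number of generators of $K$ is unbounded — this is precisely why \cite{as} is a genuine strengthening of the BFC theorem. The actual proof there requires substantial additional work (first reducing modulo the bounded subgroup $K'$ to the case of abelian $K$, and then a delicate combinatorial analysis of commutators of conjugates, with Dietzmann-type closures applied to carefully chosen bounded normal sets), none of which is recoverable from your sketch. So the proposal has a genuine gap at its central point — one you candidly flag yourself.
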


The next lemma is essentially Lemma 2.1 in \cite{eb}. 
\begin{lemma}\label{eberhard}
Let $G$ be a compact group with normalized Haar measure $\mu$ and let $r \geq 1$. Suppose that $X$ is a closed symmetric subset of $G$ containing the identity. If $\mu(X) > \frac{1}{r+1}$, then $\< X \> = X^{3r}$.
\end{lemma}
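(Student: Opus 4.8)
The plan is to reduce to the case $\langle X\rangle=G$ and then extract the bound $3r$ from a greedy covering argument resting on the simple fact that a set of measure exceeding $\frac1{r+1}$ admits at most $r$ pairwise ``$X^2$-separated'' translates. For the reduction, put $H=\langle X\rangle$; since $X\subseteq H$ we have $\mu(H)\geq\mu(X)>0$, so by Lemma \ref{measure of open subgroup} $H$ is open of finite index, $\mu(H)=[G:H]^{-1}$, and its normalized Haar measure $\mu_H$ satisfies $\mu_H(X)=[G:H]\mu(X)\geq\mu(X)>\frac1{r+1}$. Since $\langle X\rangle=H$, it suffices to prove the lemma under the extra assumption $\langle X\rangle=G$: the resulting equality $X^{3r}=G$ then reads as $X^{3r}=H=\langle X\rangle$ back inside the original group.

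Assume now $\langle X\rangle=G$, and record two facts. First, the abstract subgroup $\bigcup_{k\geq1}X^k$ is $\sigma$-compact, hence measurable, and has measure at least $\mu(X)>0$; by Lemma \ref{measure of open subgroup} it is open, hence closed, hence equal to $\langle X\rangle=G$, so $G=\bigcup_kX^k$. Second, if $s_1,\dots,s_m\in G$ satisfy $s_i^{-1}s_j\notin X^2$ for all $i\neq j$, then the translates $s_1X,\dots,s_mX$ are pairwise disjoint (because $X=X^{-1}$), so $m\mu(X)\leq1$ and therefore $m\leq r$.

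Next I would construct elements $s_1=1,s_2,s_3,\dots$ greedily so that $\{s_1,\dots,s_j\}$ stays $X^2$-separated, and I set $U_j=\bigcup_{i=1}^{j}s_iX^2$. If $U_j\neq G$, then $U_jX\neq U_j$: otherwise $U_jX^k=U_j$ for all $k$, and since $1\in X^2\subseteq U_j$ this forces $U_j\supseteq\bigcup_kX^k=G$, a contradiction. So one can choose $g\in U_j$ and $x\in X$ with $gx\notin U_j$ and put $s_{j+1}=gx$; then $s_{j+1}\notin s_iX^2$ for every $i\leq j$, so the enlarged set is again $X^2$-separated. An easy induction shows $U_j\subseteq X^{3j-1}$ for all $j$: $U_1=X^2$, and if $U_j\subseteq X^{3j-1}$ then $s_{j+1}\in U_jX\subseteq X^{3j}$, whence $U_{j+1}=U_j\cup s_{j+1}X^2\subseteq X^{3j-1}\cup X^{3j+2}=X^{3(j+1)-1}$. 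By the second fact the construction must stop at some $j\leq r$ with $U_j=G$, and then $G=U_j\subseteq X^{3j-1}\subseteq X^{3r}$. Hence $X^{3r}=G=\langle X\rangle$.

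The substantive point --- the reason the exponent is $3r$ rather than something far larger --- is the greedy step: one leaves $U_j$ by a single multiplication by $X$ (using $U_jX\neq U_j$), which keeps each new separator $s_{j+1}$, and hence $U_{j+1}$, inside a controlled power of $X$. I expect this to be the only delicate part; everything else is bookkeeping with Lemma \ref{measure of open subgroup} and the disjointness of translates. This also reproves Lemma 2.1 of \cite{eb} in the form stated here.
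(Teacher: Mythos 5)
Your argument is correct, but it proves the lemma by a genuinely different mechanism than the paper. The paper works directly with the chain of powers of $X$: it picks, for each $i\le r$, an element $x_i\in X^{3i+1}\setminus X^{3i}$ (if such exists), observes that then $x_iX\subseteq X^{3i+2}\setminus X^{3i-1}$, so the translates $x_0X,\dots,x_rX$ lie in disjoint ``annuli'' and have total measure $(r+1)\mu(X)>1$, a contradiction; hence $X^{3i+1}=X^{3i}$ for some $i\le r$, and this stabilized power is already $\<X\>$. You instead run a greedy/Ruzsa-type covering argument: after reducing to $\<X\>=G$, you build a maximal $X^2$-separated set of centers, bound its size by $r$ via the same disjoint-translate count, and track by induction that the $j$th covering set lies in $X^{3j-1}$, so $G\subseteq X^{3r-1}\subseteq X^{3r}$. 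Both proofs hinge on the identical measure pigeonhole, but the paper's route is shorter (no reduction to $H=\<X\>$, no need to know that $\bigcup_k X^k$ exhausts the group), while yours buys the additional covering statement ($\<X\>$ is covered by at most $r$ translates of $X^2$) and even the marginally sharper conclusion $\<X\>=X^{3r-1}$. One cosmetic point: you invoke Lemma \ref{measure of open subgroup} for the abstract subgroup $\bigcup_{k\ge 1}X^k$, which is not a priori closed, whereas the paper's convention is that that lemma concerns closed subgroups; the fact you need is the standard Steinhaus--Weil statement that a Haar-measurable subgroup of positive measure is open, and in any case you could avoid it entirely by noting that each $U_j$ is compact, hence closed, so $U_jX=U_j$ together with $1\in U_j$ already forces $U_j\supseteq\overline{\bigcup_k X^k}=\<X\>=G$.
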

\begin{proof}
Suppose $x_i \in X^{3i+1} \setminus X^{3i}$ for $i = 0, \dots, r$. Then for each $i$, as long as $X^{3i+1} \setminus X^{3i}$ is nonempty, we have $$x_iX \subseteq X^{3i+2} \setminus X^{3i-1}.$$ \noindent So, assume that the sets $X^{3i+1} \setminus X^{3i}$ are nonempty for $i=0,\dots, r$. Then $x_0X, \dots, x_rX$ are disjoint subsets of $G$, each of measure $\mu(X)$, and $$\mu\left(\bigcup_{i=0}^r x_iX\right) = (r+1)\mu(X) > 1.$$ \noindent Therefore $X^{3i+1} =  X^{3i}$ for some $i \leq r$. In particular, $X^{3r} = \<X\>.$ 
\end{proof}

In a compact group $G$ the set  $\{x \in G \, | \, |x^G| \leq n\}$ is closed and thus measurable. 

\begin{lemma}\label{fc-elements}
Let $G$ be a compact group and $n$ a positive integer. The set $X = \{x \in G \, | \, |x^G| \leq n\}$ is a closed subset of $G$.
\end{lemma}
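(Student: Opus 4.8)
The plan is to show that the complement $G \setminus X$ is open. So fix $x \in G$ with $|x^G| \geq n+1$ and aim to produce an open neighbourhood of $x$ contained in $G \setminus X$. Choose elements $g_0, g_1, \dots, g_n \in G$ such that the conjugates $x_j := g_j x g_j^{-1}$ are pairwise distinct. Since $G$ is compact Hausdorff, finitely many distinct points can be separated: pick pairwise disjoint open sets $U_0, \dots, U_n \subseteq G$ with $x_j \in U_j$ for each $j$.

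Next I would exploit that conjugation by a fixed element is continuous. For each $j$ the map $c_{g_j} \colon G \to G$ given by $c_{g_j}(y) = g_j y g_j^{-1}$ is continuous (it is a composition of the multiplication and inversion maps of the topological group $G$), and $c_{g_j}(x) = x_j \in U_j$, so there is an open neighbourhood $V_j$ of $x$ with $c_{g_j}(V_j) \subseteq U_j$. Put $V = \bigcap_{j=0}^n V_j$. Then $V$ is an open neighbourhood of $x$, and for every $y \in V$ the $n+1$ elements $g_0 y g_0^{-1}, \dots, g_n y g_n^{-1}$ lie respectively in the pairwise disjoint sets $U_0, \dots, U_n$, hence are pairwise distinct. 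Therefore $|y^G| \geq n+1$, i.e.\ $y \notin X$. This shows $V \subseteq G \setminus X$, so $G \setminus X$ is open and $X$ is closed.

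There is no serious obstacle here; the only point worth care is that a compact group need not be metrizable, so one argues with open neighbourhoods directly rather than with sequences, and one uses only the continuity of conjugation by a fixed element together with the Hausdorff property to separate the finitely many conjugates $x_0,\dots,x_n$. (If one prefers, the same conclusion can be reached by noting $X = \{x \in G : [G:C_G(x)] \leq n\}$ and arguing with the fibres of the orbit map $g \mapsto g x g^{-1}$, but the direct neighbourhood argument above is the cleanest.)
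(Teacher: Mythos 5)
Your proof is correct, but it takes a genuinely different and more direct route than the paper. The paper argues at the identity: given $a\notin X$ it picks $x_1,\dots,x_n$ with the commutators $[a,x_i]$ nontrivial and pairwise distinct, then uses Theorem 4.9 of Hewitt--Ross to build symmetric neighbourhoods $U\supseteq V$ of $1$ that are invariant under all inner automorphisms (this is where compactness enters), and shows via commutator identities that the $[av,x_i]$ remain nontrivial and pairwise distinct for every $v\in V$, so the neighbourhood $aV$ misses $X$. You instead separate $n+1$ pairwise distinct conjugates $g_jxg_j^{-1}$ by disjoint open sets, which is possible since $G$ is Hausdorff, and pull these back through the finitely many continuous maps $y\mapsto g_jyg_j^{-1}$ to get a neighbourhood $V$ of $x$ on which every element still has at least $n+1$ conjugates. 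What your argument buys is economy and generality: it uses only the Hausdorff property and continuity of conjugation by fixed elements, so it works in an arbitrary Hausdorff topological group, with no appeal to invariant neighbourhoods or to compactness; it also yields, verbatim, the variant recorded in Remark \ref{remark} (closedness of $\{x\in G \mid |x^S|\leq n\}$ for an arbitrary subset $S$), since the conjugating elements $g_j$ can simply be chosen inside $S$. The paper's heavier construction is not needed for the statement as such; its invariant neighbourhoods give uniform control of $[v,g]$ over all $g\in G$, which is more than the proof actually uses.
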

\begin{proof}
It is sufficient to show that if $a \in G \setminus X$, then $a$ is contained in an open subset $U$ which has empty intersection with $X$. Since $a \notin X$, we can choose $n+1$ elements $x_1,...,x_{n+1}$ in such a way that the conjugates $a^{x_i}$ are distinct for $i=1,\dots,n+1$. Set $$U=\{u\in G;\ [u,x_ix_j^{-1}]\neq1 \text{ for } 1\leq i,j\leq n+1\}.$$

Observe that $a\in U$ and every element in $U$ has at least $n+1$ conjugates, whence $U\cap X=\emptyset$. Further, since the commutator map is continuous, $U$ is open. The proof is complete.
\end{proof}

\begin{remark}\label{remark}\textnormal {If $S$ is any subset of $G$, we denote by $x^S$ the set of $S$-conjugates of $x$ in $G$. The previous argument also proves that the set $\{x \in G \, | \, |x^S| \leq n\}$ is closed in $G$ for any (not necessarily closed) subset $S$ of $G$.}
\end{remark}

\section{Proof of Proposition \ref{ds adaptation}}

Now we are able to prove Proposition \ref{ds adaptation}, which we restate here for the reader's convenience. 
\medskip

\textit{
Let $\epsilon > 0$ and let $G$ be a compact group having a subgroup $K$ such that $Pr(K,G) \geq \epsilon$. Then there is a normal subgroup $T \leq G$ and a subgroup $B \leq K$ such that the indices $[G:T]$ and $[K:B]$ and the order of $[T,B]$ are $\epsilon$-bounded. 
}

 \begin{proof}Let $\mu$ and $\nu$ be the normalized Haar measures of $G$ and $K$, respectively. Set $$X = \{x \in K \, | \, | x^G| \leq 2/\epsilon\}.$$

\noindent Note that $X$ is measurable, by Lemma \ref{fc-elements}: it is the intersection of $K$ and the closed set $\{x \in G \, | \, |x^G| \leq 2/\epsilon\}$. We have $$K \setminus X = \{x \in K \, | \, |x^G| > 2/\epsilon\}.$$ \noindent  Since $\mu(C_G(x)) < \epsilon/2$ for all $x \in K\setminus X$, it follows that
 \vspace{-5pt}\begin{equation*}
    \begin{split}
      \epsilon \leq  Pr(K,G) = & \int\limits_K \mu(C_G(x)) d\nu(x) \\
                = & \int\limits_X \mu(C_G(x)) d\nu(x) + \int\limits_{K\setminus X} \mu(C_G(x)) d\nu(x) \\
                \leq & \int\limits_X d\nu(x) + \int\limits_{K \setminus X} \frac{\epsilon}{2} d\nu(x) \\
                = & \nu(X) + \frac{\epsilon}{2} (1 - \nu(X)) \leq \nu(X) + \frac{\epsilon}{2}.
    \end{split}
\end{equation*} \noindent This implies that $\epsilon/2 \leq \nu(X).$ Let $B$ be the subgroup generated by $X$. Then, by Lemma \ref{eberhard}, every element of $B$ is a product of at most $6/\epsilon$ elements of $X$. Clearly, $\nu(B) \geq \nu(X) \geq \epsilon/2$, so the index of $B$ in $K$ is at most $2/\epsilon$, by Lemma \ref{measure estimate on the index}. Furthermore, $|b^G| \leq (2/\epsilon)^{6/\epsilon}$ for every $b \in B$. 

Let $L = \< B^G\>$. Theorem \ref{acciarri-shumyatsky} tells us that the commutator subgroup $[L,L]$ has finite $\epsilon$-bounded order. Let us use the bar notation for images of subgroups of $G$ in $G/[L,L]$. By Lemma \ref{degree of group and quotient}, $Pr(\overline{K}, \overline{G}) \geq Pr(K,G) \geq \epsilon.$  Moreover, $[\overline{K}:\overline{B}] \leq [K:B] \leq \epsilon/2$ and $|\overline{b}^{\overline{G}}| \leq |b^G| \leq (2/\epsilon)^{6/\epsilon}$ for any $b \in B$. Thus we can pass to the quotient over $[L,L]$ and assume that $L$ is abelian. 

Now we set $$Y = \{y \in G \, | \, |y^K| \leq 2/\epsilon \}.$$ \noindent Observe that $Y$ is closed, by Remark \ref{remark}. 
Arguing as before, since $\nu(C_K(y)) < \epsilon/2$ for all $y \in G \setminus Y$, we have \begin{equation*}
    \begin{split}
        \epsilon \leq Pr(K,G) = &  \int\limits_G \nu(C_K(y)) d\mu(y) \\
                = & \int\limits_Y \nu(C_K(y)) d\mu(y) + \int\limits_{G\setminus Y} \nu(C_K(y)) d\mu(y) \\
                \leq & \int\limits_Y d\mu(y) + \int\limits_{G \setminus Y} \epsilon/2 d\mu(y) \\
                = & \mu(Y) + \epsilon/2 (1 - \mu(Y)) \leq \mu(Y) + \epsilon/2.
    \end{split}
\end{equation*}

\noindent Therefore, $\mu(Y) \geq \epsilon/2$. Let $E$ be the subgroup generated by $Y$. Lemma \ref{eberhard} ensures that every element of $E$ is a product of at most $6/\epsilon$ elements of $Y$. Also, we have $\mu(E) \geq \mu(Y) \geq \epsilon/2$, so the index of $E$ in $G$ is at most $2/\epsilon$, by Lemma \ref{measure estimate on the index}.  Since $|y^K| \leq 2/\epsilon$ for every $y \in Y$, it follows that $|g^K| \leq (2/\epsilon)^{6/\epsilon}$ for every $g \in E$. Let $T$ be the maximal normal subgroup of $G$ contained in $E$. Then the index $[G:T]$ is $\epsilon$-bounded. Moreover, $|b^G| \leq (2/\epsilon)^{6/\epsilon}$ for every $b \in B$ and $|g^K| \leq (2/\epsilon)^{6/\epsilon}$ for every $g \in T$. As $L$ is abelian, we can apply Lemma \ref{over abelian subgroup} and deduce that $[T,B]$ has finite $\epsilon$-bounded order. The proposition follows. 
\end{proof}

\section{About $\epsilon$-central subgroups}

Let $G$ be a topological group generated by a symmetric set $X$. If it is possible to write $g\in G$ as a product of finitely many elements from $X$, we denote by $w(g)$ the shortest length of such an expression. If $g$ cannot be written as a product of finitely many elements of $X$, we simply say that $w(g)$ is infinite. The next result is Lemma 2.1 in \cite{dierings-s}.
\begin{lemma}\label{glaucia}
Let $G$ be a group generated by a symmetric set $X$ and let $D$ be a subgroup of index $m$ in $G$. Then every coset $Db$ contains an element such that $w(g) \leq m-1$.
\end{lemma}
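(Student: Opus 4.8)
The plan is to run a breadth-first search on the set of right cosets of $D$ in $G$, showing that every additional ``generator step'' either has already covered all cosets or reaches a genuinely new one, so that all $m$ cosets are reached after at most $m-1$ steps. For $k \ge 0$ let $S_k = \{g \in G : w(g) \le k\}$ be the ball of radius $k$ with respect to $X$, so that $S_0 = \{1\}$ (the empty product gives $w(1)=0$) and $S_{k+1} = S_k \cup S_k X$. Consider the nested family
\[
\mathcal{C}_k = \{\, Dg : g \in S_k \,\}
\]
of subsets of the coset space, with $\mathcal{C}_0 = \{D\}$ and $\mathcal{C}_0 \subseteq \mathcal{C}_1 \subseteq \cdots$. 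The lemma is exactly the assertion that $\mathcal{C}_{m-1}$ is the entire coset space.

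The key step is the strict-growth claim: if $\mathcal{C}_k$ is not all of the coset space, then $\mathcal{C}_{k+1} \supsetneq \mathcal{C}_k$. I would argue by contradiction. If $\mathcal{C}_{k+1} = \mathcal{C}_k$ while $\mathcal{C}_k$ omits some coset, then for every $g \in S_k$ and every $x \in X$ the coset $Dgx$ lies in $\mathcal{C}_{k+1} = \mathcal{C}_k$; hence the union $U_k = \bigcup_{Dg \in \mathcal{C}_k} Dg = D S_k$ satisfies $U_k X \subseteq U_k$. Since $X$ is symmetric and generates $G$, iterating gives $U_k G = U_k$, and as $1 \in D \subseteq U_k$ we conclude $U_k = G$, i.e. $\mathcal{C}_k$ was the whole coset space after all --- a contradiction. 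Therefore the chain $\mathcal{C}_0 \subseteq \mathcal{C}_1 \subseteq \cdots$ increases strictly until it reaches the full coset space; since $|\mathcal{C}_0| = 1$, we get $|\mathcal{C}_j| \ge j+1$ as long as $\mathcal{C}_{j-1}$ is proper, and hence $\mathcal{C}_{m-1}$ is already everything.

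To conclude, an arbitrary coset $Db$ lies in $\mathcal{C}_{m-1}$, so $Db = Dg$ for some $g \in S_{m-1}$, and such a $g$ satisfies $w(g) \le m-1$, as required. I do not expect a real obstacle here: the only things to keep honest are the subadditivity of $w$ along $X$ (which is what lets $Dgx$ enter at level $k+1$ from $Dg$ at level $k$) and the termination bookkeeping that turns ``strictly increasing until full'' into the explicit bound $m-1$; the reason the bound is $m-1$ rather than $m$ is precisely the base case $\mathcal{C}_0 = \{D\}$.
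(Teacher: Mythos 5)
Your proof is correct: the ball/coset bookkeeping, the strict-growth step (using symmetry of $X$ to pass from $U_kX\subseteq U_k$ to $U_kG=U_k$), and the count $|\mathcal{C}_j|\ge j+1$ all hold, giving the bound $m-1$. Note that the paper itself gives no proof — it quotes this as Lemma 2.1 of the cited work of Dierings and Shumyatsky — and your breadth-first ``cosets reached by words of length at most $k$ strictly increase until they exhaust all $m$ cosets'' argument is essentially the standard proof found there.
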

We remark that Lemma \ref{glaucia} holds for topological groups and their closed (open) subgroups of finite index. Indeed, for an integer $r\geq 0$ let $D_r$ be the union of the cosets of $D$ containing some element $g$ with $w(g)\leq r$. Then $D_r \subseteq D_{r+1}$ and $D_rX\subseteq D_{r+1}$ for all $r$. Let $R$ be the minimal integer such that $D_{R+1} = D_R$. Then $D_R$ is a closed set containing the group generated by $X$, so $D_R =G$. Since $D$ has $m$ cosets and $D_0 =D$, $R<m$.

We now proceed to the proof of Proposition \ref{proposition}, which we restate here for the reader's convenience. 
\medskip

\textit{Let $G$ be a compact group and let $l, n$ be positive integers. Suppose that there is a subgroup $K$ of $G$ such that $[G:C_G(g^l)] \leq n$ for every $g \in K$. Then there exists a positive integer $e$, depending only on $l$ and $n$, and a normal subgroup $T$ of $G$, such that the index $[G:T]$ and the order of $[K^e,T]$ both are finite and $(l,n)$-bounded.}
\medskip

Let $G$ be a compact group satisfying the hypothesis of Proposition \ref{proposition}. Let $X$ be the union of the conjugacy classes of $G$ containing an $l$th power of an element of $K$ and let $H$ be the subgroup generated by $X$.  Define $m$ as the maximum of the indices of $C_H(x)$ in $H$, where $x \in X$. Obviously, $m \leq n$. 

\begin{lemma}\label{boundedd}
For any $x \in X$ the order of the subgroup $[H,x]$ is $m$-bounded. 
\end{lemma}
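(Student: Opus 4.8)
The plan is to show that $[H,x]$ is finite and that its order is bounded in terms of $m$ alone, by exploiting the fact that $H$ is generated by the conjugacy class of (roughly) $m$-bounded size to which $x$ belongs. First I would observe that since $x \in X$ and $X$ consists of full conjugacy classes, we have $x^H \subseteq x^G \subseteq X$, and every element of $X$ has at most $m$ conjugates in $H$. Thus inside $H$ the element $x$ has at most $m$ conjugates, say $x = x_1, x_2, \dots, x_k$ with $k \le m$. The subgroup $[H,x]$ is generated by the commutators $[h,x]$ for $h \in H$; but $h^{-1}x h$ is one of the $x_i$, so $[h,x] = x^{-1} x_i$ lies in the finite set $\{x^{-1}x_i : 1 \le i \le k\}$. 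Hence $[H,x]$ is generated by at most $m-1$ elements of a very restricted form, but this alone does not bound its order — I need more structure.

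The key point is that $N := [H,x]$ is normalized by $H$ (it is generated by $x^{-H} \cdot x$, equivalently by $\{x_i^{-1}x_j\}$, and $H$ permutes the $x_i$), and that $N$ is generated by a set of size at most $m$ on which $H$ acts; I would then argue that $N$ is in fact generated by finitely many conjugates of a single element and that the relevant centralizer conditions force finiteness. Concretely: the normal closure $\langle x^H \rangle$ contains $N$ as a subgroup, and $x$ together with its conjugates generate a subgroup in which $x$ has an $m$-bounded conjugacy class. The clean way is to apply a B.~H.~Neumann-type argument locally: consider the subgroup $M = \langle x_1, \dots, x_k \rangle = \langle x^H \rangle$. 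Since each $x_i$ is $G$-conjugate to $x$ and lies in $X$, each $x_i$ has at most $m$ conjugates in $M$ (conjugation in $M$ is a restriction of conjugation in $H$, and $[H : C_H(x_i)] \le m$ forces $[M : C_M(x_i)] \le m$). Therefore $M$ is generated by finitely many elements each having an $m$-bounded conjugacy class in $M$, so $M$ is a finitely generated BFC-group with bound $m$; by the quantitative B.~H.~Neumann theorem (the Guralnick--Maróti bound cited in the introduction, or just finiteness with an $m$-bounded estimate), $[M,M]$ is finite of $m$-bounded order. Since $N = [H,x] \le [M, M] \cdot (\text{something in } Z)$ — more precisely $[H,x] = [M,x] \le [M,M]$ because every generator $[h,x]$ of $[H,x]$ lies in $M$ and is a commutator of elements of $M$ — we conclude $[H,x] \le [M,M]$, hence $|[H,x]|$ is $m$-bounded.

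Let me restate the cleanest route so the write-up is tight. Set $M = \langle x^H \rangle$, the normal closure of $x$ in $H$. For every $h \in H$ the conjugate $x^h$ lies in $X$, so $[H : C_H(x^h)] \le m$, and a fortiori $[M : C_M(x^h)] \le m$; thus every element of the generating set $x^H$ of $M$ has an $M$-conjugacy class of size at most $m$. Since $x^H$ has at most $m$ elements, $M$ is generated by at most $m$ elements, each with $M$-conjugacy class of size $\le m$. By B.~H.~Neumann's theorem in its quantitative form, $M' = [M,M]$ is finite with order bounded by a function of $m$. Finally $[H,x]$ is generated by the commutators $[h,x]$, $h \in H$, each of which is the commutator of two elements of $M$ (namely $h^{-1}x^{-1}h \cdot x$, with $h^{-1}x^{-1}h \in x^H \subseteq M$ and $x \in M$), so $[H,x] \le M'$ and therefore $|[H,x]|$ is $m$-bounded.

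The main obstacle is making sure the B.~H.~Neumann input is genuinely quantitative and applies to the \emph{abstract} (possibly non-closed) subgroup $M$: Neumann's theorem is a purely group-theoretic statement, so topology is not an issue, but one must check that the bound depends only on the size bound $m$ for the conjugacy classes and the number of generators — which the cited bounds (Wiegold, Guralnick--Maróti) provide, in fact the number of generators can be absorbed since a group all of whose classes have size $\le m$ already has $m$-bounded derived subgroup. A secondary subtlety is verifying that $[M : C_M(x^h)] \le m$ really follows from $[H : C_H(x^h)] \le m$; this is immediate because $C_M(x^h) = M \cap C_H(x^h)$ and distinct cosets of $C_M(x^h)$ in $M$ lie in distinct cosets of $C_H(x^h)$ in $H$. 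Everything else is bookkeeping with commutator identities.
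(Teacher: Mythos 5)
Your reduction breaks at the very last step: the containment $[H,x]\leq [M,M]$ with $M=\langle x^H\rangle$ is false in general. For $h\in H$ the element $[h,x]=(x^h)^{-1}x$ is a \emph{product} of two elements of $M$, but it is a commutator of elements of $M$ only if $x^h$ and $x$ are conjugate \emph{inside} $M$ (equivalently, have the same image in $M/M'$), and there is no reason for that: $x^{-1}x^h=[x,h]$ lies in $[M,H]$, which can be strictly larger than $[M,M]$. A concrete counterexample fully within the lemma's setting: take $G=H=D_8$, $K=G$, $l=1$, so $X=G$ and $m=2$, and let $x$ be a non-central reflection. Then $x^H=\{x,xz\}$ with $z$ the central involution, $M=\langle x^H\rangle$ is a Klein four group, so $M'=1$, yet $[H,x]=\langle z\rangle$ has order $2$. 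So bounding $|M'|$ cannot bound $|[H,x]|$. (A secondary, repairable overstatement: from the fact that the $\leq m$ generators $x^h$ of $M$ have $M$-classes of size $\leq m$ you cannot conclude that $M$ is BFC with bound $m$ --- products of many such generators may have large classes; what you can say is that $\bigcap_h C_M(x^h)=Z(M)$ has index at most $m^m$, and then Schur's theorem bounds $|M'|$. That is the correct quantitative input here, not Neumann's BFC theorem.)

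The missing idea, which is how the paper proceeds, is that the commutators $[h,x]$ involve arbitrary $h\in H$ whose conjugacy classes are not controlled, so the auxiliary subgroup must contain suitable conjugating elements as well, and these must be chosen with bounded class size. The paper does this via Lemma \ref{glaucia}: since $[H:C_H(x)]\leq m$, one can pick coset representatives $y_1,\dots,y_m$ of $C_H(x)$ in $H$ that are products of at most $m-1$ elements of $X$; then $[H,x]$ is generated by the $[y_i,x]$, and each $y_{ij}\in X$ has $H$-class of size at most $m$. Taking $D$ to be generated by the ($m$-boundedly many) $H$-conjugates of $x$ and of the $y_{ij}$, one gets $[H,x]\leq D'$, $[D:Z(D)]\leq m^s$ with $s$ $m$-bounded, and Schur's theorem finishes. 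If you want to keep your normal-closure framework, you would have to enlarge $M$ to include these bounded-length representatives exactly as the paper does; as written, your argument does not prove the lemma.
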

\begin{proof}
Since the index of $C_H(x)$ in $H$ is at most $m$, Lemma \ref{glaucia} guarantees that there are elements $y_1, \dots, y_m$ in $H$ such that each $y_i$ is a product of at most $m-1$ elements of $X$ and the subgroup $[H,x]$ is generated by the commutators $[y_i, x]$, for $i = 1, \dots, m$. For any such $i$ write $y_i = y_{i1}\dots y_{i(m-1)}$, where $y_{ij}$ belongs to $X$. Using the standard commutator identities, we can rewrite $[y_i, x]$ as a product of conjugates in $H$ of the commutators $[y_{ij}, x]$. Let $\{h_1, \dots, h_s\}$ be the set of conjugates in $H$ of all elements from the set $\{x, y_{ij} \, |\, 1 \leq i,j\leq m-1\}$. Note that the number $s$ here is $m$-bounded. This follows from the fact that $C_H(x)$ has index at most $m$ in $H$ for every $x \in X$. Let $D$ be the subgroup of $H$ generated by $h_1, \dots, h_s$. Since $[H,x]$ is contained in the commutator subgroup $D'$, it suffices to show that $D'$ has finite $m$-bounded order. Observe that the center $Z(D)$ has index at most $m^s$ in $D$, since the index of $C_H(h_i)$ is at most $m$ for every $h_i$. Thus, by Schur's theorem \cite[10.1.4]{robinson}, we conclude that $D'$ has finite $m$-bounded order. 
\end{proof}

We now argue by induction on $m$. If $[H:C_H(g^{l^2})] \leq m-1$ for all $g \in K$, then by induction the result holds. We therefore assume that there is $d \in K$ such that $[H:C_H(d^{l^2})] = m$. Of course, necessarily, $[H:C_H(d^l)] = m$. Set $a = d^l$ and choose $b_1, \dots, b_m$ in $H$ such that $a^H = \{a^{b_i}\, | \, i = 1, \dots, m\}$ and $w(b_i) \leq m-1$ (the existence of the elements $b_i$ is guaranteed by Lemma \ref{glaucia}). Since $C_H(a) = C_H(a^l)$, it follows that $(a^l)^H = \{(a^l)^{b_i} \, | \, i=1,\dots, m\}$. Set $U = C_G(\<b_1,\dots,b_m\>)$. Note that the index of $U$ in $G$ is $n$-bounded. Indeed, since $w(b_i) \leq m-1$ we can write $b_i = b_{i1}\dots b_{i(m-1)}$, where $b_{ij} \in X$ and $i = 1, \dots, m$. By the hypothesis, the index of $C_G(b_{ij})$ in $G$ is at most $n$ for any such element $b_{ij}$. Thus, $[G:U] \leq n^{(m-1)m}$.

\begin{lemma}\label{bounded}
Suppose that $u \in U$ and $ua \in X$. Then $[H,u] \leq [H,a]$.
\end{lemma}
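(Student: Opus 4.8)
The plan is to show that every generator $[h,u]$ with $h \in H$ of the subgroup $[H,u]$ already lies in $[H,a]$, using the commutator calculus together with the fact that $u$ centralizes all the $b_i$ and that $ua \in X$ so its centralizer has index at most $m$ in $H$. First I would record the key structural facts: $a = d^l$ has exactly $m$ conjugates in $H$, namely $a^{b_1},\dots,a^{b_m}$, and likewise $(ua)^H$ has at most $m$ elements since $ua \in X$. Because $u \in U = C_G(\langle b_1,\dots,b_m\rangle)$, we have $(ua)^{b_i} = u a^{b_i}$ for each $i$, so the $m$ elements $u a^{b_1},\dots,u a^{b_m}$ are precisely the distinct $H$-conjugates of $ua$; in particular the map $a^{b_i} \mapsto u a^{b_i}$ is a bijection between $a^H$ and $(ua)^H$.

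Next I would exploit this bijection together with Lemma 3.4 (or rather the fact underlying it): for $h \in H$ the element $(ua)^h$ equals $ua^{b_j}$ for some $j$, hence $u\,a^h = u\,a^{b_j}$, giving $a^h = a^{b_j}$ — but more usefully, I want to compare $[h,ua]$ and $[h,a]$. Writing $[h,ua] = (ua)^{-1}(ua)^h$ and $[h,a] = a^{-1}a^h$, and using $u^h = u^{?}$ — here one must be careful, because $h$ need not centralize $u$. The cleaner route is to use the Hall–Witt type identity $[h,ua] = [h,a]\,[h,u]^{a}$ and, symmetrically, to note that since $C_H(ua)$ has index at most $m$ in $H$ and $C_H(a)$ has index exactly $m$ in $H$ with $C_H(a) \subseteq C_H(u a^{b_i}/a^{b_i})$-type relations, one deduces $C_H(a) = C_H(ua)$. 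Indeed $C_H(a) \leq C_H(ua)$ would follow once we know $C_H(a)$ centralizes $u$: if $c \in C_H(a)$, then $c$ permutes $a^H$ fixing $a$, hence permutes $(ua)^H = \{ua^{b_i}\}$ fixing $ua$, so $c \in C_H(ua)$; comparing indices ($|a^H| = |(ua)^H| = m$ forces equality of the two index-$m$ subgroups) gives $C_H(a) = C_H(ua)$.

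From $C_H(a) = C_H(ua)$ I would conclude $[H,a] = [H,ua]$: both are generated, via Lemma 3.4 applied in $H$, by commutators $[b_i, a]$ respectively $[b_i, ua]$ running over the same transversal elements, and $[b_i, ua] = [b_i,a][b_i,u]^a = [b_i,a]$ since $u$ centralizes $b_i$. Hence $[H,ua] = [H,a]$ as subgroups. Finally, for any $h \in H$ the identity $[h,ua] = [h,a]\,[h,u]^{a}$ rearranges to $[h,u]^a = [h,a]^{-1}[h,ua] \in [H,a]$ (using that $[H,a]$ is normalized by $H$, being the image of $H$ under $x \mapsto [x,a]$ — more precisely $[H,a] \trianglelefteq \langle H,a\rangle$), so $[h,u] \in [H,a]$. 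As $h$ was arbitrary, $[H,u] \leq [H,a]$, as desired.

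The main obstacle I anticipate is the bookkeeping around which elements centralize $u$: $u$ is only known to centralize the specific elements $b_1,\dots,b_m$ (equivalently, $\langle b_1,\dots,b_m\rangle$), not all of $H$, so every step that wants to ``pull $u$ through'' a conjugation must be justified either by $u \in U$ (when conjugating by a $b_i$ or a product of them) or by the coincidence $C_H(a) = C_H(ua)$ (when conjugating by a centralizing element). Establishing that coincidence cleanly — via the bijection $a^{b_i} \leftrightarrow ua^{b_i}$ between the two conjugacy classes of equal size $m$ — is the crux; once it is in hand, the reduction of $[H,a]$ and $[H,ua]$ to commutators with the $b_i$'s and the final Hall–Witt manipulation are routine.
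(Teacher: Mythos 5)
Your setup is right up to the crux, and the crux is where the argument breaks. You correctly obtain that $(ua)^{b_i}=ua^{b_i}$ for all $i$ (since $u\in U$) and that these $m$ distinct elements exhaust $(ua)^H$ because $ua\in X$ forces $[H:C_H(ua)]\le m$. But the step you then lean on, $C_H(a)=C_H(ua)$, is not established by your argument. You write that $c\in C_H(a)$ ``permutes $a^H$ fixing $a$, hence permutes $(ua)^H=\{ua^{b_i}\}$ fixing $ua$.'' The conjugation action of $c$ on $(ua)^H$ sends $ua^{b_i}$ to $u^c(a^{b_i})^c$, not to $u\,(a^{b_i})^c$; in particular $(ua)^c=u^c a$, and to conclude this equals $ua$ you need $u^c=u$ --- which is exactly the statement you are trying to prove (you even flag this issue one sentence earlier, ``$h$ need not centralize $u$,'' and the permutation argument quietly commits the same error). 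All that the class computation gives for $c\in C_H(a)$ is $u^{-1}u^c=a^{b_j}a^{-1}$ for some $j$, which need not be trivial; so as written the inclusion $C_H(a)\le C_H(ua)$, and hence the equality $[H,ua]=[H,a]$ obtained from a common transversal $\{b_i\}$, is a genuine gap.

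The good news is that the claim you call the crux is not needed, and the material you already have closes the argument the way the paper does. Since $(ua)^H=\{ua^{b_i}\}$, for any $h\in H$ there is some $b\in\{b_1,\dots,b_m\}$ with $(ua)^h=(ua)^b=ua^b$, i.e. $u^h a^h=ua^b$, whence $[u,h]=u^{-1}u^h=a^{b}a^{-h}\in[H,a]$ (using that $[H,a]$ is normal in $H$, as $a\in H$); this gives $[H,u]\le[H,a]$ directly. Equivalently, in your language: $[h,ua]=((ua)^h)^{-1}(ua)=a^{-b}a=[b,a]\in[H,a]$ for every $h$, so $[H,ua]\le[H,a]$ without any transversal or centralizer comparison, and then your identity $[h,ua]=[h,a][h,u]^a$ finishes as you intended. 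So the route is salvageable by deleting the $C_H(a)=C_H(ua)$ detour, at which point it coincides with the paper's one-line computation.
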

\begin{proof}
For each $i = 1, \dots, m$ we have $(ua)^{b_i} = ua^{b_i}$, since $u$ belongs to $U$. By hypothesis, $ua \in X$. Hence, taking into account the assumption on the cardinality of the conjugacy class of $ua$ in $H$, we deduce that $(ua)^H$ consists exactly on the elements $(ua)^{b_i}$, for $i = 1, \dots, m$. Therefore, given an arbitrary element $h \in H$, there exists $b \in \{b_1, \dots, b_m\}$ such that $(ua)^h = (ua)^b$ and so $u^ha^h = ua^b$. It follows that $[u,h] = a^ba^{-h} \in [H,a]$, and the lemma holds. 
\end{proof}

Let $R$ be the normal closure in $G$ of the subgroup $[H,a]$, that is, $R = [H,a^{b_1}]\dots[H,a^{b_n}]$, where $a^{b_i}$ are all the conjugates of $a$ in $G$ (if $|a^G| \leq n-1$, then not all the $a^{b_1}, \dots, a^{b_n}$ are pairwise distinct). By Lemma \ref{boundedd}, each of the subgroups $[H, a^{b_i}]$ has $n$-bounded order. Thus, the order of $R$ is $n$-bounded as well.

Let $Y_1 = Xa^{-l} \cap U$ and $Y_2 = Xa^{-1} \cap U$. Note that for any $y \in Y_1$, the product $ya^l$ belongs to $X$. So, by Lemma \ref{bounded} applied with $a^l$ in place of $a$ and $y$ in place of $u$, the subgroup $[H,y]$ is contained in $[H,a^l]$, which is contained in $R$. Similarly, for any $y\in Y_2$, we have $[H,y] \leq R$. Set $Y = Y_1\cup Y_2$. Thus, $[H,Y]\leq R$. 

Let $U_0$ be the maximal normal subgroup of $G$ contained in $U$. Observe that the index of $U_0$ in $G$ is $n$-bounded. Observe further that for any $u \in U_0$ the commutators $[u,a^{-l}]$ and $[u,a^{-1}]$ lie in $Y$. Since $[U_0, a^{-1}] = [U_0, a]$, we deduce that $$[H,[U_0,a]] \leq R.$$ \noindent Let $K_0 = K \cap U_0$. We remark that  $(ua)^l(a^l)^{-1} \in Y$ whenever $u \in K_0$. We pass to the quotient $\overline{G} = G/R$ and use the bar notation to denote images in $\overline{G}$. We know that $\overline{Y}$ is central in $\overline{H}$. We also deduce that $[\overline{U}_0, \overline{a}] \leq Z(\overline{H})$. 

Since $(ua)^l(a^l)^{-1} \in Y$ whenever $u \in K_0$ and since $\overline{Y} \leq Z(\overline{H})$, it follows that in the quotient $\overline{G}/Z(\overline{H})$ the element $\overline{a}$ commutes with $\overline{U}_0$ and $(\overline{u}\overline{a})^l(\overline{a})^{-l} = 1$ for every $\overline{u} \in \overline{K}_0$. This implies that $\overline{K}_0$ has exponent dividing $l$ modulo $Z(\overline{H})$. It follows that $\overline{K}_0^l$ is abelian and every element of $\overline{K_0}^{l^2}$ is again an $l$th power of an element in $\overline{K}_0$. We therefore deduce that $$Pr(\overline{K}_0^{l^2}, \overline{G}) \geq \frac{1}{n}.$$

By Proposition \ref{ds adaptation} there is a normal subgroup $\overline{T}$ in $\overline{G}$ and a subgroup $\overline{V}$ in $\overline{K}_0^{l^2}$ such that the indices $[\overline{G}:\overline{T}]$ and $[\overline{K}_0^{l^2}:\overline{V}]$ and the order of $[\overline{T}, \overline{V}]$ are $(l,n)$-bounded. Let $T$ be the inverse image of $\overline{T}$ in $G$ and $V$ the inverse image of $\overline{V}$ in $K_0^{l^2}$. Bearing in mind that the order of $R$ is $n$-bounded, we conclude that the indices $[G:T]$ and $[K_0^{l^2}:V]$ are $(n,l)$-bounded, as also is the order of $[T,V]$. As the index of $V$ in $K_0^{l^2}$ is bounded, there is a positive $(n,l)$-bounded integer $e$ such that $K^e \leq V$. This completes the proof of the proposition. \hfill $\qed$

\begin{corollary} Let $G$ be a compact group and let $G_0$ be the connected component of identity in $G$. Then, the following are equivalent 
\begin{itemize}
\item[(i)] The probability $Pr(G_0, G)$ is positive;
\item[(ii)] The centralizer $C_G(G_0)$ is open in $G$;
\item[(iii)] The connected component $G_0$ is $\epsilon$-central in $G$ for some $\epsilon >0$.
\end{itemize}
\end{corollary}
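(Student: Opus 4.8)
The plan is to establish a cycle of implications $(iii)\Rightarrow(i)\Rightarrow(ii)\Rightarrow(iii)$, using the monotonicity and quotient lemmas together with the general structure of $G_0$ as a connected compact group. First I would dispose of $(iii)\Rightarrow(i)$: this is immediate, since if $G_0$ is $\epsilon$-central then $Pr(\langle g\rangle,G)\geq\epsilon$ for every $g\in G_0$, and integrating the estimate $\mu(C_G(g))\geq Pr(\langle g\rangle, G)$ over $G_0$ gives $Pr(G_0,G)\geq\epsilon>0$. (Alternatively one can note that $Pr(G_0,G)\geq Pr(G_0,G_0)$ is not what we want, so the direct integral argument is the clean route.) Actually the cleaner observation is that $\epsilon$-centrality of $G_0$ in $G$ implies, via Proposition~\ref{ds adaptation} applied to $K=\langle g\rangle$ for each $g$, that $[G:C_G(g^l)]\leq n$ for $(l,n)$ depending only on $\epsilon$; but for $(i)$ the integral bound suffices.

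Next, $(i)\Rightarrow(ii)$. Suppose $Pr(G_0,G)>0$. By Lemma~\ref{monotone degree}, $Pr(G_0,G)\leq Pr(G_0,G_0)$, so $G_0$ has positive commuting probability with itself; but the point I actually want is about $C_G(G_0)$. The natural move is to apply Proposition~\ref{ds adaptation} with $K=G_0$: there is a normal subgroup $T\leq G$ and a subgroup $B\leq G_0$ with $[G:T]$, $[G_0:B]$ and $|[T,B]|$ all finite. Since $G_0$ is connected, it has no proper open (hence no proper finite-index) subgroup, so $B=G_0$. Thus $[T,G_0]$ is finite. A finite normal subgroup of a connected-by-finite group that is contained in a connected group... more precisely, $[T,G_0]$ is a finite normal subgroup of $G_0$ (it is contained in $G_0$ as $G_0$ is normal and $[T,G_0]\leq[G,G_0]\leq G_0$), and $G_0$ acts on it by conjugation; since $G_0$ is connected and $\operatorname{Aut}$ of a finite group is finite, $G_0$ centralizes $[T,G_0]$, and then $G_0$ acting on $T$ with $[T,G_0]$ central and finite means $G_0/C_{G_0}(T)$... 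I would argue: $T/(Z(T)\cap T)$ embeds in a finite group once $[T,G_0]$ is finite and $G_0$ is connected, forcing $G_0\leq C_G(T)$ after passing to a finite-index subgroup. Since $[G:T]$ is finite and $T$ contains $G_0$ (because $G_0$ is connected and $T$ is open), we get that $C_G(G_0)\supseteq C_T(G_0)$ has finite index in $T$, hence is open in $G$. The main obstacle is exactly this step: extracting openness of $C_G(G_0)$ from the finiteness of $[T,G_0]$, and I expect it requires the observation that a connected compact group acting continuously on a finite group acts trivially.

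Finally, $(ii)\Rightarrow(iii)$. If $C=C_G(G_0)$ is open, then $[G:C]=s$ is finite by Lemma~\ref{measure of open subgroup}. For any $g\in G_0$ we have $C\leq C_G(g)$, so $[G:C_G(g)]\leq s$, hence $|g^G|\leq s$ and therefore (as noted in the introduction) $Pr(\langle g\rangle,G)\geq\frac1s$. Taking $\epsilon=1/s$ shows $G_0$ is $\epsilon$-central, closing the cycle. I would present the three implications in the order $(iii)\Rightarrow(i)$, $(i)\Rightarrow(ii)$, $(ii)\Rightarrow(iii)$, devoting most of the write-up to the middle implication, and flag that the key external input there is the triviality of continuous actions of connected compact groups on finite groups together with the fact that a connected closed subgroup is contained in every open subgroup.
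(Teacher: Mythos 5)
Your cycle $(iii)\Rightarrow(i)\Rightarrow(ii)\Rightarrow(iii)$ is reasonable, and your $(ii)\Rightarrow(iii)$ step is correct and matches the paper. But the implication $(iii)\Rightarrow(i)$ as you argue it is broken: the pointwise inequality $\mu(C_G(g))\geq Pr(\langle g\rangle,G)$ is false, and in fact the inequality goes the other way (since $C_G(g)\leq C_G(x)$ for every $x\in\langle g\rangle$, one has $Pr(\langle g\rangle,G)=\int_{\langle g\rangle}\mu(C_G(x))\,d\nu(x)\geq\mu(C_G(g))$). For a concrete failure, take $g$ an involution in a finite group with $[G:C_G(g)]$ huge: then $Pr(\langle g\rangle,G)>\tfrac12$ while $\mu(C_G(g))$ is tiny. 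Indeed, if your inequality were true, $\epsilon$-centrality of any subgroup $K$ would give $Pr(K,G)\geq\epsilon$, which is false in general (an infinite product of nonabelian groups of exponent $p$ is $\tfrac1p$-central but has $Pr(G,G)=0$). The implication $(iii)\Rightarrow(i)$ genuinely needs a structural input about $G_0$: the paper applies Theorem \ref{pavel} to get $T$ of finite index with $[G_0^e,T]$ finite, then uses divisibility of $G_0$ (Mycielski) to get $G_0^e=G_0$, concludes by connectedness that $[G_0,T]=1$, and bounds $Pr(G_0,G)\geq\mu(T)>0$. Your fallback remark that Proposition \ref{ds adaptation} gives $[G:C_G(g^l)]\leq n$ is a viable alternative start, but to finish you must again invoke divisibility of $G_0$ (every element of $G_0$ is an $l$th power of an element of $G_0$), which your proposal never uses.

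The step you flag as the main obstacle in $(i)\Rightarrow(ii)$ is also not actually closed, and the sketch you give is off target: your observation that a connected group acts trivially on a finite group shows $G_0$ centralizes $[T,G_0]$, which is not the needed conclusion, and the assertions about $T/(Z(T)\cap T)$ and about $C_T(G_0)$ having finite index in $T$ are unsupported. The missing argument is short and is the one in the paper: for each fixed $t\in T$ the map $x\mapsto[x,t]$ is a continuous map from the connected set $G_0$ into the finite set $[T,G_0]$ sending $1$ to $1$, hence is constantly $1$; therefore $T\leq C_G(G_0)$, and since $T$ is closed of finite index it is open, so $C_G(G_0)$ is open. (Your reduction $B=G_0$ via the absence of proper closed finite-index subgroups of a connected group is fine and is essentially equivalent to the paper's use of divisibility there.) So the overall architecture is right, but both $(iii)\Rightarrow(i)$ and the key step of $(i)\Rightarrow(ii)$ need the connectedness/divisibility arguments supplied above.
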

\begin{proof}
Suppose first that $Pr(G_0, G) > 0$. Then there are subgroups of finite index $T$ of $G$ and $B$ of $G_0$ such that $[T, B]$ is finite. Since $G_0$ is divisible \cite{my}, the only finite index subgroup of $G_0$ is $G_0$ itself. Therefore, the set of commutators $\{[x, y] \, | \, x \in G_0, y \in T\}$ is connected and finite, hence is trivial and $T \leq C_G(G_0)$. 

Now, assume that $C_G(G_0)$ is open in $G$ and let $\mu$ and $\mu_0$ be the normalized Haar measures of $G$ and $G_0$, respectively. For any $x \in G_0$, the inclusion $C_G(G_0) \leq C_G(x)$ holds, thus $\mu(C_G(G_0)) \leq \mu(C_G(x))$. We have $$Pr(\<x\>, G) = \int\limits_{G_0} \mu(C_G(x)) d\mu_0(x) \geq \mu(C_G(G_0)) > 0.$$ \noindent We conclude that $G_0$ is $\epsilon$-central, where $\epsilon = [G:C_G(G_0)]^{-1}$, and so (ii) implies (iii).

Now we assume the validity of (iii) and prove that (i) holds. By Theorem \ref{pavel}, there are a finite index subgroup $T$ of $G$ and a natural number $e$ such that $[G_0^e, T]$ is finite. Since $G_0$ is divisible, $G_0 = G_0^e$ and so $T$ centralizes $G_0$ and $T \leq C_G(x)$ for any $x \in G_0$. Writing $\mu$ and $\mu_0$ for the normalized Haar measures of $G$ and $G_0$, respectively, we have $$Pr(G_0, G) = \int\limits_{G_0} \mu(C_G(x)) d\mu_0(x) \geq   \mu(T) > 0.$$ 
\end{proof}

\section{Corollaries for Finite Groups} 

In this section we collect some easy corollaries of Theorem \ref{pavel} for finite groups. Roughly, we show that many well-known results on the exponent of a finite group admit a probabilistic interpretation in the spirit of Theorem \ref{pavel}.

The restricted Burnside problem was whether the order of an $r$-generator finite group of exponent $e$ is bounded in terms of $r$ and $e$ alone. This was famously solved in the affirmative by Zelmanov \cite{ze1, ze2}. Theorem \ref{pavel} enables us to obtain the following extension of Zelmanov's theorem.

\begin{theorem}
Let $G$ be a finite $\epsilon$-central $r$-generator group. Then $G$ has a normal subgroup $N$ such that both the index $[G : N]$ and the order of the commutator subgroup $[N,N]$  are $(r, \epsilon)$-bounded.
\end{theorem}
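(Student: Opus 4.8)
The plan is to deduce this from Theorem \ref{pavel} together with the positive solution of the restricted Burnside problem due to Zelmanov \cite{ze1, ze2} and Schur's theorem. First I would apply Theorem \ref{pavel} in the case $K = G$, which (as recorded in the discussion following that theorem) produces an $\epsilon$-bounded number $e$ and a normal subgroup $T \leq G$ such that the index $[G:T]$ and the order of $[G^e, T]$ are both $\epsilon$-bounded. Passing to the quotient $\overline{G} = G/[G^e,T]$, we may assume that $\overline{G}^e$ centralizes $\overline{T}$; this reduction costs only an $\epsilon$-bounded factor, since any bound for $|[\overline{T},\overline{T}]|$ lifts to the bound $|[T,T]| \leq |[G^e,T]| \cdot |[\overline{T},\overline{T}]|$ in $G$. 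In $\overline{G}$ one has $\overline{T}^e \leq \overline{G}^e$, so $\overline{T}^e \leq Z(\overline{T})$, and hence $\overline{T}/Z(\overline{T})$ has exponent dividing $e$.

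Next I would bring in the $r$-generator hypothesis. Since $\overline{T}$ has $\epsilon$-bounded index in the $r$-generator group $\overline{G}$, the Schreier index formula shows that $\overline{T}$ is generated by $1 + [\overline{G}:\overline{T}](r-1)$ elements, an $(r,\epsilon)$-bounded number $d$. Thus $\overline{T}/Z(\overline{T})$ is a $d$-generator finite group of exponent dividing $e$, and Zelmanov's theorem bounds its order by a function of $d$ and $e$ alone, hence by an $(r,\epsilon)$-bounded number. By Schur's theorem \cite[10.1.4]{robinson}, the order of $[\overline{T},\overline{T}]$ is then bounded in terms of $|\overline{T}/Z(\overline{T})|$, so it too is $(r,\epsilon)$-bounded. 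Undoing the reduction to the quotient, $|[T,T]|$ is $(r,\epsilon)$-bounded as well, and since $[G:T]$ is $\epsilon$-bounded the subgroup $N := T$ has the required properties.

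The argument is essentially a repackaging of known results, so I do not expect a serious obstacle. The one point that needs a little care is the passage to $G/[G^e,T]$ and back, together with checking that $\overline{T}$ inherits $r$-generation and $\epsilon$-bounded index; but this is immediate, because $[G^e,T]$ has $\epsilon$-bounded order and quotients of $r$-generator groups are $r$-generator. The genuine mathematical content is imported from elsewhere: the structural conclusion of Theorem \ref{pavel} and Zelmanov's solution of the restricted Burnside problem.
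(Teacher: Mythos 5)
Your argument is correct, but it takes a different route from the paper's. The paper's proof is a two-line intersection trick: with $e$ and $T$ from Theorem \ref{pavel}, it applies Zelmanov's solution of the restricted Burnside problem directly to the $r$-generator group $G/G^e$ of exponent dividing $e$ to bound $[G:G^e]$, and then sets $N = G^e \cap T$, noting that $[N,N] \leq [G^e,T]$ already has $\epsilon$-bounded order; no passage to a quotient, no Schreier rank formula, and no Schur's theorem are needed. You instead keep $N = T$: after factoring out $[G^e,T]$ you get $\overline{T}^e \leq Z(\overline{T})$, use the Schreier bound to make $\overline{T}$ a $d$-generator group with $d$ being $(r,\epsilon)$-bounded, apply Zelmanov to $\overline{T}/Z(\overline{T})$, and finish with the quantitative form of Schur's theorem (the same form the paper uses in Lemma \ref{boundedd}), then lift back along the $\epsilon$-bounded kernel. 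All of these steps are sound, so your proof is complete. The trade-off is instructive: the paper's choice $N = G^e \cap T$ buys a derived subgroup bounded in terms of $\epsilon$ alone at the price of an index depending on both $r$ and $\epsilon$, whereas your $N = T$ has index depending only on $\epsilon$ but a derived subgroup bound depending on both parameters, and costs two extra ingredients (Schreier and Schur). Your reduction modulo $[G^e,T]$ is essentially the argument the paper sketches after Theorem \ref{pavel} for bounding the exponent of $[T,T]$ in the compact setting, here sharpened to an order bound by the $r$-generation hypothesis.
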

\begin{proof}
By Theorem \ref{pavel} there is an $\epsilon$-bounded number $e$ and a normal subgroup $T \leq G$ such that the index $[G : T]$ and the order of $[G^e, T]$ are $\epsilon$-bounded. The solution of the Restricted Burnside problem implies that the index $[G:G^e]$ is $(e,r)$-bounded. Therefore the subgroup $N = G^e \cap T$ has the required properties.
\end{proof}

An important part of the eventual solution of the restricted Burnside problem was developed by Hall and Higman in their paper \cite{hh}. They proved that if $p$ is a prime and $G$ is a finite group with Sylow $p$-subgroups of exponent $p^s$, then $G$ has a normal series of $s$-bounded length all of whose factors are $p$-groups, or $p'$-groups, or direct products of nonabelian simple groups of order divisible by $p$. 

\begin{theorem}\label{probabilistic hall-higman}
Let p be a prime and G a finite group with $\epsilon$-central Sylow $p$-subgroups. Then $G$ has a normal series of $\epsilon$-bounded length
all of whose factors are $p$-groups, or $p'$-groups, or direct products of nonabelian simple groups of order divisible by $p$.
\end{theorem}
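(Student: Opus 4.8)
The plan is to reduce the statement to the classical Hall–Higman theorem quoted just above by converting the $\epsilon$-centrality hypothesis on the Sylow $p$-subgroups into a bound on their exponent, at the cost of passing to a subgroup of bounded index. First I would apply Theorem \ref{pavel} to $G$ with $K$ equal to a fixed Sylow $p$-subgroup $P$ (which is $\epsilon$-central in $G$ by hypothesis): this yields an $\epsilon$-bounded integer $e$ and a normal subgroup $T\trianglelefteq G$ with $[G:T]$ and $|[P^e,T]|$ both $\epsilon$-bounded. Replacing $e$ by $p^s$ where $p^s$ is the least power of $p$ that is a multiple of $e$, we may assume $e=p^s$ is an $\epsilon$-bounded power of $p$ and still have $[P^e,T]$ of $\epsilon$-bounded order.

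Next I would clear the finite obstruction $[P^e,T]$. Let $R=[P^e,T]^G$ be its normal closure in $G$; since $[P^e,T]$ has $\epsilon$-bounded order and $G$ permutes finitely many conjugates of it (a subset of the conjugates governed by the action on $P$, hence of $\epsilon$-bounded number — alternatively one simply notes $|R|$ divides a bound coming from the Neumann-type estimates), $R$ has $\epsilon$-bounded order. Passing to $\overline G=G/R$, the image $\overline P{}^{\,e}$ now centralizes $\overline T$. Intersecting with $\overline T$: the subgroup $\overline S=\overline P{}^{\,e}\cap\overline T$ is a normal subgroup of the Sylow $p$-subgroup $\overline P\cap\overline T$ of $\overline T$, lies in the centre of $\overline T$ in the relevant sense, and the quotient $(\overline P\cap\overline T)/\overline S$ has exponent dividing $e=p^s$. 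A cleaner way to package this: in $\overline T$ a Sylow $p$-subgroup $Q$ satisfies $Q^e\le Z(\overline T)\cap Q$, so $Q/(Q\cap Z(\overline T))$ has exponent dividing $p^s$; but a central $p$-subgroup sitting inside a Sylow $p$-subgroup contributes only a bounded-length central extension, so after a further bounded reduction one gets a normal subgroup $T_1$ of $G$, of $\epsilon$-bounded index, whose Sylow $p$-subgroups have exponent dividing $p^{s'}$ for an $\epsilon$-bounded $s'$.

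Now apply the Hall–Higman theorem \cite{hh} to $T_1$: since the Sylow $p$-subgroups of $T_1$ have exponent $p^{s'}$ with $s'$ $\epsilon$-bounded, $T_1$ has a normal series of $s'$-bounded (hence $\epsilon$-bounded) length each of whose factors is a $p$-group, a $p'$-group, or a direct product of nonabelian simple groups of order divisible by $p$. Finally I would splice this series with the chain $1\le T_1\le G$: the factor $G/T_1$ is finite of $\epsilon$-bounded order, so it may be refined into a series of $\epsilon$-bounded length whose factors are $p$-groups or $p'$-groups (e.g. a chief series of $G/T_1$, whose length is bounded by $\log_2|G/T_1|$, each chief factor being an elementary abelian $p$- or $q$-group or a product of simple groups, all of bounded order hence in particular direct products of simple groups of order divisible by $p$ only when their order is). Concatenating gives the desired normal series of $\epsilon$-bounded length for $G$.

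The main obstacle is the bookkeeping in the middle step: going from ``$[P^e,T]$ is finite of bounded order'' to ``a bounded-index normal subgroup has Sylow $p$-subgroups of bounded exponent.'' One must be careful that $P^e$ is generated by $e$th powers of \emph{all} elements of $P$ (not just a generating set), that factoring out $R$ genuinely forces $\overline P{}^{\,e}$ to centralize $\overline T$, and that a central $p$-subgroup of bounded order inside a Sylow $p$-subgroup raises the exponent only by a bounded factor — this last point uses that if $Z\le Q$ is central in $Q$ and $Q/Z$ has exponent $p^{s'}$ while $|Z|\le c$, then $Q$ has exponent at most $p^{s'}|Z|$, so the exponent stays $\epsilon$-bounded. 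Once these are handled, the two appeals to Hall–Higman and to a chief series are entirely routine.
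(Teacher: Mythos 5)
Your plan has a genuine gap in the middle step, and in fact the intermediate goal you set yourself is unattainable. You aim to produce a normal subgroup $T_1$ of $\epsilon$-bounded index whose Sylow $p$-subgroups have $\epsilon$-bounded exponent, and you justify the absorption of the central layer by the estimate ``if $Z\leq Q$ is central and $Q/Z$ has exponent $p^{s'}$ while $|Z|\leq c$, then $Q$ has bounded exponent.'' But the central subgroup you actually have is $Q\cap Z(\overline{T})$, which contains $Q^{e}$ and is in no way of bounded order: nothing in the hypotheses bounds $|Z(\overline T)|$ or $|Q^e|$. Worse, no reduction can reach your target: take $G=T$ cyclic of order $p^{N}$ with $N$ huge; this group is $1$-central, yet every subgroup of bounded index has exponent at least $p^{N-c}$. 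So the step ``after a further bounded reduction one gets $T_1$ of bounded index with Sylow $p$-subgroups of bounded exponent'' is false as stated, and the whole route through it collapses. (Your normal-closure step, by contrast, is repairable but under-justified as written: one should note that $[P^e,T]$ is normalized by $P^{e}T$, a subgroup of bounded index, so it has boundedly many $G$-conjugates, each normal in $T$ and of bounded order, whence their product $R$ has bounded order.)

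The correct way out — and what the paper does — is not to force bounded exponent on a bounded-index subgroup, but to carry the offending central layer as an explicit \emph{term of the normal series}. Set $P_0=P^{e}$, observe that $[P_0,T]$ is normal in $T$ of $\epsilon$-bounded order, and let $Z$ be the preimage in $T$ of $Z(T/[P_0,T])$. In the series $1\leq [P_0,T]\leq Z\leq T$ the first factor has bounded order (so refines into boundedly many factors of the allowed kinds), the factor $Z/[P_0,T]$ is abelian, hence the product of a $p$-group and a $p'$-group, and only the quotient $T/Z$ has Sylow $p$-subgroups of exponent dividing $e$; Hall--Higman is then applied to $T/Z$, not to a bounded-index subgroup of $G$. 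Splicing with the bounded top factor $G/T$ (as in your final step, which is fine) gives the series of $\epsilon$-bounded length. The conceptual point you missed is that an abelian or central factor of unbounded order costs nothing in this statement, because it splits into a $p$-group and a $p'$-group; there is no need — and no possibility — to eliminate it by shrinking $G$.
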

\begin{proof}
 Let $P$ be a Sylow $p$-subgroup of $G$. By Theorem \ref{pavel} there is an $\epsilon$-bounded number $e$ and a normal subgroup $T \leq  G$ such that the index $[G : T]$ and the order of $[P^e, T]$ are $\epsilon$-bounded. Set $P_0 = P^e$. It is sufficient to show that $T$ has a normal series with the required properties. Note that $[P_0, T]$ is normal in $T$. Let $Z$ be the inverse image in $T$ of the center of $T/[P_0, T]$. Consider the normal series $$1\leq [P_0, T] \leq Z \leq T.$$ \noindent  Here $[P_0, T]$ has $\epsilon$-bounded order, $Z/[P_0, T]$ is abelian, and $T/Z$ has Sylow $p$-subgroups of exponent dividing $e$. According to the Hall-Higman theory $T/Z$ has a normal series of $\epsilon$-bounded length with the required properties. Thus, the result follows.
\end{proof}

Given a group-word $w$, we write $w(G)$ for the corresponding verbal subgroup of a group $G$, that is, the subgroup generated by the values of $w$ in $G$. The word is said to be a law in $G$ if $w(G) = 1$. In view of Theorem \ref{pavel} it is easy to see that if $w(G)$ is $\epsilon$-central in $G$, then a law of $(\epsilon,w)$-bounded length holds in the group $G$. This simple observation provides a tool for obtaining extensions of results about finite groups satisfying certain laws. We will illustrate this with a theorem bounding the nonsoluble length of a finite group. The concept of nonsoluble length $\lambda(G)$ of a finite group $G$ was introduced in \cite{ks}. This is the minimum number of nonsoluble factors in a normal series of $G$ in which every factor either is soluble or is a direct product of non-abelian simple groups. (In particular, the group is soluble if and only if its nonsoluble length is 0.)

It was shown in \cite{fumagalli} that if a word $w$ is a law in the Sylow 2-subgroup of a finite group $G$, then $\lambda(G)$ is bounded in terms of the length of the word $w$ only. This can be extended as follows. 
\begin{theorem}
Let $w$ be a group-word and $P$ a Sylow $2$-subgroup of a finite group $G$ such that $w(P)$ is $\epsilon$-central in $P$. Then $\lambda(G)$ is
$(\epsilon, w)$-bounded.
\end{theorem}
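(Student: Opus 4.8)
The plan is to deduce from Theorem \ref{pavel} that the Sylow $2$-subgroup $P$ satisfies a nontrivial group-word law whose length is $(\epsilon,w)$-bounded, and then to quote the theorem of \cite{fumagalli}. We may assume $w$ is a nontrivial word, since otherwise $w(P)=1$ and there is nothing to prove. First I would apply Theorem \ref{pavel} to the finite (hence compact) group $P$ together with its $\epsilon$-central subgroup $w(P)$. This yields an $\epsilon$-bounded positive integer $e$ and a normal subgroup $T\leq P$ such that $s:=[P:T]$ and $m:=|[w(P)^e,T]|$ are both $\epsilon$-bounded.

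Next I would turn this structural information into an explicit law. Since $T$ is normal of index $s$ in $P$, the quotient $P/T$ has exponent dividing $s!$, so $h^{s!}\in T$ for every $h\in P$. On the other hand, if $g$ is any $w$-value in $P$, then $g\in w(P)$, hence $g^{e}\in w(P)^{e}$, and therefore $[g^{e},h^{s!}]\in[w(P)^{e},T]$ for all $h\in P$. As this subgroup has order at most $m$, each of its elements has order dividing $m!$; consequently $[g^{e},h^{s!}]^{m!}=1$ for every $w$-value $g$ and every $h\in P$. In other words, the group-word
\[
  v(x_{1},\dots,x_{k},y)=\bigl[\,w(x_{1},\dots,x_{k})^{e},\;y^{s!}\,\bigr]^{m!}
\]
is a law in $P$. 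Because $e\geq 1$, $s\geq 1$, $m\geq 1$ and $w$ is nontrivial, $v$ is a nontrivial group-word, and since $e$, $s$ and $m$ are $\epsilon$-bounded while the length of $w$ is fixed, the length of $v$ is $(\epsilon,w)$-bounded.

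Finally, $v$ is a law in the Sylow $2$-subgroup $P$ of the finite group $G$, so the result of \cite{fumagalli} applies and bounds $\lambda(G)$ in terms of the length of $v$ alone; hence $\lambda(G)$ is $(\epsilon,w)$-bounded, as required.

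Since Theorem \ref{pavel} and the theorem of \cite{fumagalli} do all the real work, I do not expect a serious obstacle here. The only point requiring a little care is the bookkeeping in the middle step: verifying that $h^{s!}\in T$ from the index bound, that a subgroup of order at most $m$ has exponent dividing $m!$, and—most importantly—that the word $v$ does not freely reduce to the trivial word, so that \cite{fumagalli} is genuinely applicable. No new group-theoretic input beyond the two quoted results is needed.
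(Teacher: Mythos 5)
Your proposal is correct and takes essentially the same route as the paper, whose proof is precisely the one-line combination of Theorem \ref{pavel} with the result of \cite{fumagalli}, via the observation (made in the paragraph preceding the theorem) that $w(P)$ being $\epsilon$-central forces a law of $(\epsilon,w)$-bounded length in $P$. Your explicit law $v=[\,w(x_1,\dots,x_k)^{e},y^{s!}\,]^{m!}$ simply fills in the detail the paper leaves as ``easy to see'', and your verification --- including that $v$ does not reduce to the trivial word in the free group --- is sound.
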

\begin{proof}
As explained above, this is straightforward combining the result in \cite{fumagalli} and Theorem \ref{pavel}.
\end{proof}

For a group of automorphisms $A$ of a group $G$ we write $C_G(A)$ for the centralizer of $A$ in $G$. It is well-known that if a finite group $G$ admits a coprime group of automorphisms $A$, then $C_{G/N}(A) = NC_G(A)/N$ for any $A$-invariant normal subgroup $N$ of $ G$ (see for example \cite[Theorem 6.2.2 (iv)]{gor}). Here the group $A$ is a coprime group of automorphisms if $(|G|, |A|) = 1$. The symbol $A^{\#}$ stands for the set of nontrivial elements of the group $A$. 
The main result of \cite{khushu} states that if a finite group $G$ admits an elementary abelian coprime group of automorphisms $A$ of order $p^2$ such that $C_G(\phi)$ has exponent dividing $\epsilon$ for each $\phi \in A^{\#}$, then the exponent of $G$ is $(\epsilon,p)$-bounded. We can now extend this in the following way.

\begin{theorem}\label{probabilistic autos and exp}
 Let $\epsilon > 0$, and let $G$ be a finite group admitting an elementary abelian coprime group of automorphisms $A$ of order $p^2$ such that $C_G(\phi)$ is $\epsilon$-central in $G$ for each $\phi \in A^{\#}$. Then there is a $(p, \epsilon)$-bounded number $e$ and an $A$-invariant normal subgroup $T$ such that the index $[G : T]$ and the order of $[G^e, T]$ are $(p, \epsilon)$-bounded.
\end{theorem}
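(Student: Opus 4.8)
The plan is to apply Theorem \ref{pavel} separately to each of the $\epsilon$-central subgroups $C_G(\phi)$, $\phi\in A^{\#}$, to amalgamate the resulting data into a single $A$-invariant normal subgroup, and then to invoke a restricted-Burnside-type theorem for coprime automorphism groups of rank two in order to pass from control of the commutators $[C_G(\phi)^{e_0},-]$ for the individual centralizers to control of $[G^{e_1},-]$.

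Since $A$ is abelian it fixes each $C_G(\phi)$, so every $C_G(\phi)$ with $\phi\in A^{\#}$ is an $A$-invariant, $\epsilon$-central subgroup of $G$. Theorem \ref{pavel} therefore furnishes, for each such $\phi$, an $\epsilon$-bounded integer $e_\phi$ and a normal subgroup $T_\phi\leq G$ with $[G:T_\phi]$ and $|[C_G(\phi)^{e_\phi},T_\phi]|$ both $\epsilon$-bounded. Put $e_0=\operatorname{lcm}\{e_\phi:\phi\in A^{\#}\}$, $S=\bigcap_{\phi\in A^{\#}}T_\phi$, and $T_1=\bigcap_{\alpha\in A}S^{\alpha}$; as $|A|=p^{2}$, the number $e_0$ is $(p,\epsilon)$-bounded and $T_1$ is an $A$-invariant normal subgroup of $G$ of $(p,\epsilon)$-bounded index. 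For each $\phi$ we have $T_1\leq T_\phi$ and $C_G(\phi)^{e_0}\leq C_G(\phi)^{e_\phi}$, so $L_\phi:=[C_G(\phi)^{e_0},T_1]$ lies in $[C_G(\phi)^{e_\phi},T_\phi]$ and has $(p,\epsilon)$-bounded order; moreover $L_\phi$ is normalized by both $C_G(\phi)^{e_0}$ and $T_1$, hence $L_\phi\trianglelefteq T_1$. Consequently $M:=\langle L_\phi:\phi\in A^{\#}\rangle=\prod_{\phi\in A^{\#}}L_\phi$ is a product of $p$-boundedly many normal subgroups of $T_1$, each of $(p,\epsilon)$-bounded order, so $M$ is an $A$-invariant normal subgroup of $T_1$ of $(p,\epsilon)$-bounded order. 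Finally, since $T_1$ normalizes $M$, the subgroup $M$ has at most $[G:T_1]$ conjugates in $G$, each of order $|M|$ and each normal in $T_1$; hence the normal closure $M^{G}$ is an $A$-invariant normal subgroup of $G$ of $(p,\epsilon)$-bounded order.

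Now set $V=\langle C_G(\phi)^{e_0}:\phi\in A^{\#}\rangle^{G}$, the normal closure of an $A$-invariant subgroup, so $V$ is $A$-invariant and normal in $G$. Applying the standard coprime centralizer formula (the analogue of \cite[6.2.2(iv)]{gor} for the cyclic groups $\langle\phi\rangle$) to the $A$-invariant normal subgroup $V$, we get $C_{G/V}(\phi)=C_G(\phi)V/V$ for every $\phi\in A^{\#}$, and since $C_G(\phi)^{e_0}\leq V$ this fixed-point subgroup has exponent dividing $e_0$. Invoking the known restricted-Burnside theorem for coprime automorphisms --- a finite group admitting an elementary abelian coprime group of automorphisms of order $p^{2}$ every nontrivial element of which has fixed-point subgroup of exponent dividing $e_0$ has $(p,e_0)$-bounded exponent --- we conclude that $G/V$ has exponent dividing a $(p,\epsilon)$-bounded integer $e_1$, so $G^{e_1}\leq V$. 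On the other hand, for each $\phi\in A^{\#}$ and each $g\in G$, using $T_1^{g}=T_1$ we obtain $[(C_G(\phi)^{e_0})^{g},T_1]=[C_G(\phi)^{e_0},T_1]^{g}=L_\phi^{g}\leq M^{G}$, so every conjugate $(C_G(\phi)^{e_0})^{g}$ centralizes $T_1/M^{G}$ and therefore $[V,T_1]\leq M^{G}$. Hence $[G^{e_1},T_1]\leq[V,T_1]\leq M^{G}$ has $(p,\epsilon)$-bounded order, while $[G:T_1]$ is $(p,\epsilon)$-bounded; taking $T=T_1$ and $e=e_1$ completes the proof.

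The substantive ingredient is the external restricted-Burnside theorem for rank-two coprime automorphism groups used above; the remainder is routine bookkeeping built on Theorem \ref{pavel}, and if one had to reprove that theorem it would be by far the hardest part. The points requiring care are that each $L_\phi$ is genuinely normal in $T_1$ --- this is what keeps $M$, and hence $M^{G}$, of bounded order --- that the auxiliary subgroups $T_1$, $M$, $M^{G}$ and $V$ can all be chosen $A$-invariant, which uses that the abelian group $A$ fixes every $C_G(\phi)$, and that the coprime centralizer formula is applied with the cyclic subgroups $\langle\phi\rangle$ rather than with $A$ itself.
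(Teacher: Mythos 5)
Your proof is correct and follows essentially the same route as the paper: apply Theorem \ref{pavel} to the centralizers of the nontrivial elements of $A$, intersect the resulting subgroups to get an $A$-invariant normal $T$ of bounded index, take the normal closure of the product of the bounded commutator subgroups to get a bounded-order normal subgroup, and invoke the Khukhro--Shumyatsky theorem (\cite{ks exp}) together with the coprime centralizer formula to bound the exponent of the relevant quotient and conclude $[G^e,T]$ is bounded. The only cosmetic difference is that the paper passes to the centralizer $C=\{x\in G\mid [T,x]\leq N\}$ and bounds the exponent of $G/C$, whereas you bound the exponent of $G/V$ for the normal closure $V$ of the fixed-point power subgroups and then check $[V,T]\leq M^G$ directly; you are also slightly more careful than the paper in forcing $A$-invariance of $T$ by intersecting over $A$-translates.
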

\begin{proof} Let $A_1, \dots, A_{p+1}$  be the subgroups of order $p$ in $A$ and set $G_i = C_G(A_i)$ for $i = 1, \dots, p + 1$. According to Theorem \ref{pavel} there is an $\epsilon$-bounded number $d$ and, for $i = 1, \dots, p + 1$, $A$-invariant normal subgroups $T_i \leq G$ such that the index $[G : T_i]$ and the order of $[G_i^d, T_i]$ are $\epsilon$-bounded. Set $T = \bigcap T_i$ and observe that $T$ is $A$-invariant and the index of $T$ in $G$ is $(p, \epsilon)$-bounded. Let $N_i = [G_i^d, T]$ and $N_0 = \prod N_i$. Note that the subgroup $N_0$ is normal in $T$ and has $(p, \epsilon)$-bounded order. Let $N = \<N_0^G\>$ be the normal closure of $N_0$ in $G$. Since the index of $T$ in $G$ is $(p, \epsilon)$-bounded, it follows that the order of $N$ is $(p, \epsilon)$-bounded as well. We also observe that $N$ is $A$-invariant since the subgroups $N_i$ are. Let $C$ be the centralizer of $T$ modulo $N$, that is, $C = \{ x \in G \,| \,  [T, x] \leq N \}$. Clearly, the subgroup $C$ is $A$-invariant. Moreover $G_i^d\leq C$ for $i = 1, \dots,  p + 1$. Hence, $C_{G/C}(A_i)$ has exponent dividing $d$ for each $i = 1, \dots, p + 1$. Now the main result of \cite{khushu} says that the exponent of $G/C$ is $(d, p)$-bounded. Therefore there exists a $(p, \epsilon)$-bounded number $e$ such that $G^e \leq C$, that is, $[G^e, T] \leq N$. This completes the proof. 
\end{proof}
\begin{corollary}
Under the hypotheses of Theorem \ref{probabilistic autos and exp} there exists a number $\epsilon_0 > 0$ depending only on $\epsilon$ and $p$ such that $G$ is $\epsilon_0$-central.
\end{corollary}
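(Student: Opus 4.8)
The plan is to combine Theorem \ref{probabilistic autos and exp} with the converse to Theorem \ref{pavel} recorded at the end of the introduction. By Theorem \ref{probabilistic autos and exp} there are a $(p,\epsilon)$-bounded number $e$ and an $A$-invariant normal subgroup $T\leq G$ such that $s:=[G:T]$ and $m:=|[G^e,T]|$ are $(p,\epsilon)$-bounded; say $s\leq s_0$ and $m\leq m_0$, where $s_0$ and $m_0$ depend only on $p$ and $\epsilon$. It then suffices to show that the mere existence of such $T$, $e$, $s$, $m$ forces $G$ to be $\epsilon_0$-central for a suitable $\epsilon_0=\epsilon_0(e,s_0,m_0)$, and hence $\epsilon_0=\epsilon_0(p,\epsilon)$; this is precisely the converse statement mentioned in the introduction, and for completeness I would spell out its short proof below.

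First I would bound the conjugacy classes of $G$ meeting $G^e$. Fix $g\in G$ and put $x=g^e$. For every $t\in T$ the commutator $[x,t]$ lies in $[G^e,T]$, a group of order at most $m_0$, so the map $t\mapsto[x,t]$ on $T$ takes at most $m_0$ values; since $[x,t_1]=[x,t_2]$ holds precisely when $t_1$ and $t_2$ lie in a common coset of $C_T(x)$, this gives $[T:C_T(x)]\leq m_0$. Hence $[G:C_G(x)]\leq[G:T]\,[T:C_T(x)]\leq s_0m_0=:n_0$, so every conjugacy class of $G$ that contains an $e$th power of an element of $G$ has cardinality at most $n_0$.

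Finally I would convert this class-size bound into a lower estimate for $Pr(\<g\>,G)$, arguing as in the paragraph of the introduction preceding Proposition \ref{proposition}. The monothetic subgroup $\<g^e\>$ has index dividing $e$ in $\<g\>$, because the quotient is cyclic, generated by the image of $g$, whose order divides $e$; hence, by Lemma \ref{measure of open subgroup}, the normalized Haar measure of $\<g^e\>$ inside $\<g\>$ is at least $1/e$. Moreover any element commuting with $g^e$ commutes with all its powers and, centralizers being closed, with all of $\<g^e\>$; thus $C_G(g^e)\leq C_G(z)$ and $\mu(C_G(z))\geq\mu(C_G(g^e))\geq 1/n_0$ for every $z\in\<g^e\>$. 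Restricting the integral defining $Pr(\<g\>,G)$ to $\<g^e\>$ therefore gives
\[
Pr(\<g\>,G)=\int_{\<g\>}\mu(C_G(z))\,d\mu_{\<g\>}(z)\ \geq\ \mu_{\<g\>}\big(\<g^e\>\big)\cdot\frac{1}{n_0}\ \geq\ \frac{1}{e\,n_0}.
\]
Since this holds for every $g\in G$, the group $G$ is $\epsilon_0$-central with $\epsilon_0=(e\,s_0\,m_0)^{-1}$, a number depending only on $p$ and $\epsilon$. No step presents a real difficulty; the only point to watch is that a \emph{lower} bound for $Pr(\<g\>,G)$ is required, so Lemma \ref{monotone degree} — whose monotonicity inequalities run the other way — cannot be invoked here, and one must instead work directly with the integral over the subgroup $\<g^e\>$.
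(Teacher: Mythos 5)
Your proposal is correct and takes essentially the same route as the paper: apply Theorem \ref{probabilistic autos and exp} and then the converse to Theorem \ref{pavel} recorded at the end of the introduction. The only difference is that you spell out that converse in full (the bound $[G:C_G(g^e)]\leq s_0m_0$ via $[T:C_T(g^e)]\leq m_0$, and the integral estimate $Pr(\langle g\rangle,G)\geq (e\,s_0m_0)^{-1}$ using $[\langle g\rangle:\langle g^e\rangle]\leq e$), whereas the paper merely cites it; your verification is accurate.
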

\begin{proof}
Assume the hypotheses of Theorem \ref{probabilistic autos and exp}. The theorem tells us that there is a $(p, \epsilon)$-bounded number $e$ and a normal subgroup $T$ such that the index $[G : T]$ and the order of $[G^e, T]$ are $(p, \epsilon)$-bounded. As explained in the introduction, Theorem \ref{pavel} admits a converse. Hence the result. 
\end{proof}
In the spirit of the work \cite{gs} we record the following theorem.
\begin{theorem}
 Let $\epsilon > 0$, and let $G$ be a finite group admitting an elementary abelian coprime group of automorphisms $A$ of order $p^3$ such that the commutator subgroup of $C_G(\phi)$ is $\epsilon$-central in $G$ for each $\phi \in A^{\#}$. Then there is a $(p, \epsilon)$-bounded number $e$ and an $A$-invariant normal subgroup $T$ such that the index $[G : T]$ and the order of $[[G,G]^e, T]$ are $(p, \epsilon)$-bounded. 
\end{theorem}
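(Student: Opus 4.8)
The plan is to reproduce, almost verbatim, the reduction carried out for Theorem~\ref{probabilistic autos and exp}, with the centralizers $C_G(\phi)$ replaced throughout by their commutator subgroups $[C_G(\phi),C_G(\phi)]$ and with the exponent theorem of \cite{ks exp} (for a rank-$2$ automorphism group) replaced by its counterpart for a rank-$3$ automorphism group acting on commutator subgroups. That counterpart --- the assertion that if an elementary abelian $p$-group $A$ of order $p^{3}$ acts coprimely on a finite group $H$ and $[C_H(\phi),C_H(\phi)]$ has exponent dividing $d$ for every $\phi\in A^{\#}$, then $[H,H]$ has $(d,p)$-bounded exponent --- is the only genuinely new ingredient; it is the natural rank-$3$, commutator-subgroup analogue of \cite{ks exp}, and is what the phrase ``in the spirit of \cite{gs}'' points to. Granting it, the rest of the argument is routine.

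First I would fix, for each $\phi\in A^{\#}$, the subgroup $K_\phi:=[C_G(\phi),C_G(\phi)]$; since $A$ is abelian, $C_G(\phi)$ and hence $K_\phi$ is $A$-invariant, and by hypothesis $K_\phi$ is $\epsilon$-central in $G$. Applying Theorem~\ref{pavel} to $K_\phi$ produces a normal subgroup of $G$ of $\epsilon$-bounded index together with an $\epsilon$-bounded power of $K_\phi$ whose commutator with that subgroup has $\epsilon$-bounded order; as the bound on that power depends only on $\epsilon$, one may use a single $\epsilon$-bounded exponent $d$ for all $\phi$ and so obtain normal subgroups $T_\phi\le G$ with $[G:T_\phi]$ and $|[K_\phi^{d},T_\phi]|$ both $\epsilon$-bounded. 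Replacing $T_\phi$ by the intersection of its $A$-conjugates makes it $A$-invariant at the cost of enlarging $[G:T_\phi]$ only to a $(p,\epsilon)$-bounded value, while $|[K_\phi^{d},T_\phi]|$ stays $\epsilon$-bounded (using that $K_\phi^{d}$ is $A$-invariant, so conjugation by $\alpha\in A$ carries $[K_\phi^{d},T_\phi]$ to the isomorphic subgroup $[K_\phi^{d},T_\phi^{\alpha}]$). Then I would set $T:=\bigcap_\phi T_\phi$ --- an intersection that effectively runs over the $p^{2}+p+1$ subgroups of order $p$ in $A$ --- obtaining an $A$-invariant normal subgroup of $G$ of $(p,\epsilon)$-bounded index. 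Put $N_\phi:=[K_\phi^{d},T]$, which is $A$-invariant, normal in $T$, and of $\epsilon$-bounded order (being contained in $[K_\phi^{d},T_\phi]$); then $N_0:=\prod_\phi N_\phi$, an $A$-invariant normal subgroup of $T$ of $(p,\epsilon)$-bounded order; and finally $N:=\langle N_0^{G}\rangle$, which is $A$-invariant, normal in $G$, and of $(p,\epsilon)$-bounded order, being the product of the at most $[G:T]$ distinct $G$-conjugates of $N_0$.

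Finally I would pass to $\overline G:=G/C$, where $C:=\{x\in G\mid[T,x]\le N\}$ is an $A$-invariant normal subgroup of $G$ containing every $K_\phi^{d}$, since $[K_\phi^{d},T]=N_\phi\le N$. Because $(|G|,|A|)=1$ and $C$ is $A$-invariant and normal, the coprime centralizer formula (\cite[Theorem~6.2.2]{gor}) gives that $C_{\overline G}(\phi)$ is exactly the image of $C_G(\phi)$ for every $\phi\in A^{\#}$, so $[C_{\overline G}(\phi),C_{\overline G}(\phi)]$ is the image of $K_\phi$ and has exponent dividing $d$. Applying the rank-$3$ exponent result quoted above to $\overline G$ (should $A$ fail to act faithfully on $\overline G$, some $C_{\overline G}(\phi)$ equals $\overline G$ and one may simply take $e=d$) then yields a $(d,p)$-bounded --- hence $(p,\epsilon)$-bounded --- number $e$ with $[\overline G,\overline G]^{e}=1$, that is, $[G,G]^{e}\le C$; consequently $[[G,G]^{e},T]\le[T,C]\le N$ has $(p,\epsilon)$-bounded order, while $[G:T]$ is $(p,\epsilon)$-bounded, which is what was wanted. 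The main --- indeed the only genuinely new --- obstacle is proving the rank-$3$ exponent result; beyond that, the one extra subtlety relative to the proof of Theorem~\ref{probabilistic autos and exp} is the orbit-intersection step used to force $A$-invariance of the subgroups delivered by Theorem~\ref{pavel}, together with the innocuous bookkeeping that the centralizers in play are indexed by the $p$-boundedly many subgroups of order $p$ in $A$.
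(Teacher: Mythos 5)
Your reduction is essentially the paper's own proof: apply Theorem \ref{pavel} to each $K_\phi=[C_G(\phi),C_G(\phi)]$, intersect to get an $A$-invariant normal $T$ of $(p,\epsilon)$-bounded index, form $N$ as the normal closure of the product of the subgroups $[K_\phi^{d},T]$, pass to $C=\{x\in G\mid [T,x]\le N\}$, and bound the exponent of $[G/C,G/C]$ to get $[[G,G]^e,T]\le N$. The one clarification is that the ``rank-$3$ exponent result'' you flag as the genuinely new obstacle needs no proof at all: it is precisely the main theorem of \cite{gs} (Guralnick--Shumyatsky, \emph{Derived subgroups of fixed points}), which is exactly what the paper invokes at that step, so with that citation your argument is complete.
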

\begin{proof} Let $A_1, \dots, A_s$ be the subgroups of order $p$ of $A$ and let $D_i$ denote the commutator subgroup of $C_G(A_i)$ for $ i = 1, \dots,  s$. According to Theorem \ref{pavel} there is an $\epsilon$-bounded number $d$ and, for $i = 1, \dots, s$, $A$-invariant normal subgroups $T_i \leq G$ such that the index $[G : T_i]$ and the order of $[D_i^d, T_i]$ are $\epsilon$-bounded. Set $T = \bigcap T_i$ and observe that $T$ is $A$-invariant and the index of $T$ in $G$ is $(p, \epsilon)$-bounded. Let $N_i = [D_i^d, T]$ and $N_0 = \prod N_i$. Note that $N_0$ is normal in $T$ and has $(p, \epsilon)$-bounded order. Let $N = \<N_0^G \>$ be the normal closure of $N_0$ in $G$. Since the index of $T$ in $G$ is $(p, \epsilon)$-bounded, it follows that the order of $N$ is $(p, \epsilon)$-bounded. We also observe that $N$ is $A$-invariant since the subgroups $N_i$ are. Let $C$ be the centralizer of $T$ modulo $N$, that is, $C = \{x \in G\, | \, [T, x] \leq N\}$. Clearly, the subgroup $C$ is $A$-invariant. Moreover $D_i^d \leq C$ for $i = 1, \dots, s$. Hence, $C_{G/C}(A_i)$ has commutator subgroup of exponent dividing $d$ for each $i = 1, \dots, s$. Now the main result of \cite{gs} says that the exponent of the commutator subgroup of $G/C$ is $(d, p)$-bounded. Therefore there exists a $(p, \epsilon)$-bounded number $e$ such that $[G,G]^e\leq C$, that is, $[[G,G]^e, T] \leq N$. This completes the proof.
\end{proof}

\end{document}